\documentclass[11pt]{amsart}

\usepackage[latin1]{inputenc}
\usepackage{csquotes}
\usepackage[english]{babel}
\usepackage[T1]{fontenc}

\usepackage{indentfirst}
\usepackage{enumitem}
\usepackage{amsmath,amssymb, amsbsy}
\usepackage{comment}
\usepackage{amsfonts}
\usepackage{esint}
\usepackage{latexsym}
\usepackage{amsthm}
\usepackage[dvips]{graphicx}
\usepackage{xcolor}

\usepackage[outline]{contour}
\DeclareGraphicsExtensions{.pdf,.png,.jpg,.eps}
\usepackage{stackengine,scalerel}

\usepackage{multicol}
\usepackage{tikz-cd}

\usepackage{bbm}

\usepackage[font=small,labelfont=bf]{caption}

\usepackage{amsthm}

\usepackage{lipsum}

\usepackage[active]{srcltx}
\definecolor{Arancio}{cmyk}{0,0.61,0.87,0}

\definecolor{blus}{RGB}{0,102,204}

\usepackage{doi}

\newcommand{\brd}[1]{\mathbb{#1}}
\newcommand{\R}{\brd{R}}

\newcommand{\N}{\brd{N}}
\newcommand{\LM}{\mathbb{L}}

\newcommand{\h}{\brd{H}}
\newcommand{\B}{\brd{B}}
\newcommand{\LL}{\mathbb{L}}
\newcommand{\s}{\brd{S}}

\newcommand{\bnabla}{\overline{\nabla}}
\newcommand{\second}{\mathrm{II}}
\newcommand{\X}{\mathfrak{X}}

\newcommand{\tr}{\mathrm{tr}}
\newcommand{\bpart}{\overline{\partial}}
\newcommand{\lie}{\mathcal{L}}

\newcommand{\grad}{\nabla_{\!\delta}}
\newcommand{\hess}{\nabla^2_{\!\delta\,}}

\newcommand{\abs}[1]{\left\lvert {#1} \right\rvert}
\newcommand{\norm}[2]{\left\Vert {#1} \right\Vert_{#2}}

\newcommand{\be}{\begin{equation}}
\newcommand{\ee}{\end{equation}}

\numberwithin{equation}{section}

\newtheorem{teo}{Theorem}[section]

\newtheorem{Lemma}[teo]{Lemma}
\newtheorem{Theorem}[teo]{Theorem}
\newtheorem{Proposition}[teo]{Proposition}
\theoremstyle{definition}
\newtheorem{Definition}[teo]{Definition}

\renewcommand{\div}{\operatorname{div}}

\usepackage{tikz}
\usepackage{mathdots}
\usepackage{yhmath}
\usepackage{cancel}
\usepackage{siunitx}
\usepackage{array}
\usepackage{multirow}
\usepackage{textcomp}
\usepackage{gensymb}
\usepackage{tabularx}
\usepackage{extarrows}
\usepackage{booktabs}
\usetikzlibrary{fadings}
\usetikzlibrary{patterns}
\usetikzlibrary{shadows.blur}
\usetikzlibrary{shapes}

\usepackage{hyperref}

\title[Entire spacelike radial graphs with prescribed mean curvature in $\LM^{m+1}$]{Entire spacelike radial graphs with prescribed mean curvature in the Lorentz--Minkowski space}

\begin{document}

\author{G\MakeLowercase{abriele} Cora, A\MakeLowercase{lessandro} Iacopetti, L\MakeLowercase{orenzo} Maniscalco}

\subjclass[2010]{53A10, 35J66, 53C50}
\keywords{Prescribed mean curvature, spacelike hypersurfaces, radial graphs, asymptotic to light cone, Lorentz--Minkowski space}\thanks{\emph{Acknowledgements.} Research partially supported by Gruppo Nazionale per l'Analisi Matematica, la Pro\-ba\-bi\-li\-t\`a e le loro Applicazioni (GNAMPA) of the Istituto Nazionale di Alta Matematica (INdAM). Alessandro Iacopetti is partially supported by the PRIN 2022 PNRR project 2022R537CS \emph{$NO^3$ - Nodal Optimization, NOnlinear elliptic equations, NOnlocal geometric problems, with a focus on regularity},  founded by the European Union - Next Generation EU and by the GNAMPA 2025 project E53C25002010001: ``Strutture Analitiche e Geometriche in PDEs: Regolarit\`a, Fenomeni Critici e Dinamiche Complesse".}

\address[Gabriele Cora]{D\'epartement de math\'ematique, Universit\'e Libre de Bruxelles, Campus de la Plaine - CP214 boulevard du Triomphe, 1050 Bruxelles, Belgium}
\email{gabriele.cora@ulb.be }

\address[Alessandro Iacopetti]{Dipartimento di Matematica ``Giuseppe Peano", Universit\`a degli Studi di Torino, Via Carlo Alberto 10, 10123 Torino, Italy}
\email{alessandro.iacopetti@unito.it}

\address[Lorenzo Maniscalco]{Dipartimento di Matematica ``Giuseppe Peano", Universit\`a degli Studi di Torino, Via Carlo Alberto 10, 10123 Torino, Italy}
\email{lorenzo.maniscalco@unito.it}

\begin{abstract}
In this paper we address the existence and uniqueness of entire spacelike hypersurfaces in the Lorentz--Minkowski space $\LM^{m+1}$ with prescribed mean curvature that are star-shaped with respect to a point and asymptotic to a light cone. We also establish a Willmore-type inequality and prove a non-existence result for spacelike radial graphs asymptotic to the light cone whose mean curvature belongs to $L^p$ for $1 \leq p\leq m$, in particular in the case of compactly supported mean curvature.
\end{abstract}

\maketitle

\section{Introduction}

\subsection{The problem} 
Let $m\in\mathbb{N}$ with $m\geq 2$, and fix a point $o \in \mathbb{R}^{m+1}$. A radial graph centered at $o$ is a hypersurface $\Sigma \subset \mathbb{R}^{m+1}$ such that each ray emanating from $o$ intersects $\Sigma$ at most once. 

In the Euclidean setting, the problem of finding radial graphs with prescribed mean curvature, as well as the study of their qualitative properties, has been extensively investigated for disc-type surfaces (see, e.g., \cite{Rado, Lopez, HerbDel, CaldIac, Tausch, Serrin}, and the references therein).

For closed hypersurfaces (i.e., compact and without boundary), Treibergs and Wei proved in \cite{TreibergsWei1983} an existence and uniqueness result for radial graphs. 

To state their result, let $P$ denote the normalized position vector field from an origin $o\in\R^{m+1}$, $\rho$ the distance function from $o$, and $\s^m(r)$ the sphere of radius $r$ centred at the zero vector; for simplicity, when $r = 1$, we write $\s^m$ instead of $\s^m(1)$. The result reads as follows.

\begin{Theorem}[{\cite[Theorem~(a), (b)]{TreibergsWei1983}}]\label{thm:treibergs_wei}
    Suppose $\bar H\in C^1(\R^{m+1}\setminus \{o\})$ is a positive function that satisfies
    \begin{align}\label{eq:hp_treibergs_wei}
        P(\rho \bar H(q\rho)) \le 0\,, \qquad \text{ for all } q\in\s^m\,.
    \end{align}
    Assume also that there exist two radii $0 < r \le 1 \leq R$ such that 
    \begin{align*}
        \bar H|_{\s^m(r)} > \frac{1}{r}, 
        \qquad 
        \bar H|_{\s^m(R)} < \frac{1}{R}.
    \end{align*}
    Then there exists $u\in C^{2,\alpha}(\s^m)$ for some $\alpha\in(0,1)$ such that the mean curvature $H_u$ of the associated \emph{radial graph} centred in $o$
    \[
        \Sigma_u = \{ o + q e^{u(q)} \; ;\ q\in\s^m\}
    \]
    satisfies
    \begin{align}\label{eq:prescription_treibergs_wei}
        H_u = \bar H|_{\Sigma_u}.
    \end{align}
    Moreover, if \eqref{eq:hp_treibergs_wei} holds and there are two solutions $u,v\in C^2(\s^m)$ of \eqref{eq:prescription_treibergs_wei}, then they differ by a constant; that is, the corresponding radial graphs differ by a dilation.
\end{Theorem}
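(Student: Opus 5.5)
Writing $\Sigma_u=\{o+qe^{u(q)}\}$ with $\rho=e^{u}$, I would first derive the mean curvature of a radial graph in terms of $u$ on $\s^m$. With $W=\sqrt{1+|\nabla u|^2}$, the standard computation gives
\begin{equation*}
mH_u=\frac{e^{-u}}{W}\Big(m-\Big(\delta_{ij}-\frac{u_iu_j}{W^2}\Big)u_{ij}\Big),
\end{equation*}
with $\nabla,\ u_{ij}$ taken on $\s^m$. Here $H_u$ is normalized so that the sphere $u\equiv\log r$ has $H=1/r$. Thus \eqref{eq:prescription_treibergs_wei} becomes the quasilinear elliptic equation
\begin{equation*}
Q[u]:=-\Big(\delta_{ij}-\frac{u_iu_j}{W^2}\Big)u_{ij}+m-mWe^{u}\bar H(qe^{u})=0 \quad\text{on }\s^m.
\end{equation*}
Setting $\psi(q,s)=e^{s}\bar H(qe^{s})=\rho\bar H(q\rho)$ with $\rho=e^s$, hypothesis \eqref{eq:hp_treibergs_wei} says exactly that $\partial_s\psi\le 0$. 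So the zeroth-order term $-mW\psi$ is nondecreasing in $u$, which gives the right sign for the comparison principle.

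\emph{Step 1: barriers and the $C^0$ bound.} The constants $\log r$ and $\log R$ are a subsolution and a supersolution. Indeed, $Q[\log r]=m(1-r\bar H)<0$ and $Q[\log R]>0$. At an interior maximum of $u$ with $u>\log R$ we have $\nabla u=0$ and $u_{ij}\le0$, so $Q[u]\ge m(1-\psi(q,u))\ge m(1-\psi(q,\log R))>0$ by monotonicity in $s$, contradicting $Q[u]=0$. The same argument at a minimum shows $\log r\le u\le\log R$. To run a continuity or degree argument I would embed the problem in a family, for instance $\psi_t=t\psi+(1-t)$. At $t=0$ the solutions are all constants on the unit sphere; choosing $r\le1\le R$ keeps the barriers valid along the whole path, and $\psi_t$ still satisfies $\partial_s\psi_t\le0$.

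\emph{Step 2: gradient estimate (the main obstacle).} With $u$ bounded, I need $|\nabla u|\le C$ independent of $t$. I would apply the maximum principle to $\varphi=\tfrac12|\nabla u|^2$ or to $\log W$. Differentiating the equation in the direction $\nabla u$, using the Ricci identity on $\s^m$ (this contributes the good term $(m-1)|\nabla u|^2$) and the bound $|\partial_q\bar H|\le C$ on the compact shell $\{r\le\rho\le R\}$, should give
\begin{equation*}
0\ge a^{ij}\varphi_{ij}\ge c|\nabla u|^2-C(1+|\nabla u|)W+mW\,\partial_s\psi\,|\nabla u|^2
\end{equation*}
at a maximum point of $\varphi$. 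The last term is where the monotonicity hypothesis is needed: it must not destroy the estimate. I expect the bookkeeping of the $W$-growth here to be the delicate point, and may need a test function like $\log W+\lambda u$ to absorb the bad terms. Once $W$ is bounded, the equation is uniformly elliptic. Krylov--Safonov (or De Giorgi--Nash applied to the derivatives) then yields a $C^{1,\alpha}$ bound, and Schauder theory gives $C^{2,\alpha}$.

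\emph{Step 3: existence.} With a priori $C^{2,\alpha}$ bounds uniform in $t$, I would use Leray--Schauder degree on the open set $\{\log r<u<\log R,\ \|u\|_{C^{2,\alpha}}<K\}$. By the strict barrier inequalities no solution touches the boundary. At $t=0$ the unique solution is $u\equiv0$, and it is nondegenerate: the linearization is $-\Delta+m$, which is invertible. So the degree is nonzero, and a solution exists at $t=1$.

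\emph{Uniqueness.} Let $u,v$ be two solutions and set $c=\max(u-v)$ and $\tilde v=v+c\ge u$, with equality at some point $q_0$. Since $\psi$ is nonincreasing in $s$, $\tilde v$ is a supersolution: $Q[\tilde v]\ge Q[v]=0=Q[u]$. Writing $Q[\tilde v]-Q[u]$ as a linear elliptic operator applied to $w=\tilde v-u\ge0$ with bounded coefficients, the zero-order coefficient has the right sign, so the strong maximum principle (Hopf) forces $w\equiv0$. Hence $u-v$ is constant.
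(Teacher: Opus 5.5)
The paper does not prove this statement; it quotes it from Treibergs--Wei. I therefore judge your outline on its own merits. The $C^0$ bound and the uniqueness argument are sound. In the uniqueness argument, first arrange $\max(u-v)\ge 0$ by swapping $u$ and $v$, because for $c<0$ the translate $v+c$ is a subsolution, not a supersolution. The existence part, however, has two genuine gaps.

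\emph{First gap: your homotopy starts at a degenerate problem.} For $\psi_0\equiv 1$, i.e.\ $\bar H_0=1/|x|$, the equation does not depend on $u$ itself, so every constant $u\equiv c$, $c\in\R$, is a solution. This has three consequences at $t=0$:
\begin{enumerate}
\item[(a)] The barrier inequalities become equalities, since the maximum principle only gives $Q_0[u]\ge 0$.
\item[(b)] The constants $\log r$ and $\log R$ are solutions lying on the boundary of your open set, so the degree is not defined (and homotopy invariance fails) up to $t=0$.
\item[(c)] The linearization at any constant is $-\Delta$, not $-\Delta+m$, because $\partial_s\psi_0=0$ and $W$ is stationary at $\nabla u=0$. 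Its kernel consists of the constants.
\end{enumerate}
So the claim ``the unique solution is $u\equiv0$, and it is nondegenerate'' is false, and it contradicts your own Step 1. The standard repair has two parts. Use continuity of $\bar H$ and compactness of $\s^m$ to assume $r<1<R$ strictly. Then homotope to a strictly decreasing model such as $\psi_0(s)=e^{-s}$, i.e.\ $\bar H_0=|x|^{-2}$. Convex combinations preserve $\partial_s\psi_t\le 0$ and the strict barriers. At $t=0$, $u\equiv 0$ is the only solution, and there the linearization really is $-\Delta+m$.

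\emph{Second gap: the gradient estimate is not established, and your displayed inequality has the wrong sign.} This estimate is the real content of the theorem. Differentiating $a^{ij}u_{ij}=m-mW\psi$ along $\nabla u$ at a critical point of $\varphi=\tfrac12|\nabla u|^2$ gives
\[
a^{ij}\varphi_{ij}=a^{ij}u_{ki}u_{kj}+(m-1)|\nabla u|^2-mW\langle\nabla u,\nabla_q\psi\rangle-mW\,\partial_s\psi\,|\nabla u|^2 .
\]
Under the hypothesis the $\partial_s\psi$-term is \emph{nonnegative}, and this is exactly where monotonicity enters. With your sign it would be a negative term of order $|\nabla u|^3$, and no estimate could survive.

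Even with the correct sign, $\varphi$ alone does not close. $\partial_s\psi$ may vanish identically (the dilation-invariant case). Then the good terms $a^{ij}u_{ki}u_{kj}+(m-1)|\nabla u|^2$ are of the same order $W^2$ as the bad term $mW\langle\nabla u,\nabla_q\psi\rangle$, whose size depends on $\|\bar H\|_{C^1}$. You need the weighted function $\log W+\lambda u$, in the spirit of Lemma~\ref{prop:gradient_estmates}.

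Here is how that works geometrically. Take the outer normal $\nu$, set $\chi=\langle X,\nu\rangle=\rho/W$, and let $\Delta,\nabla$ be taken on $\Sigma_u$. The identities $\Delta\chi=\langle X^\top,\nabla(mH)\rangle+mH-|A|^2\chi$ and $\Delta\tfrac12|X|^2=m-mH\chi$ give
\[
\Delta\log W=|A|^2+|\nabla\log\chi|^2-\frac{mW}{\rho}\,\partial_\rho(\rho\bar H)+m\,\nu(\bar H)+\frac{m(1-H\chi)}{\rho^2}-\frac{2|X^\top|^2}{\rho^4}.
\]
The hypothesis removes the only term of order $W$. At a maximum of $\log W+\lambda u$ one has $\nabla\log\chi=(1+\lambda)X^\top/\rho^2$, whence
\[
\rho^{-2}\big[(\lambda^2-1)(1-W^{-2})+(1+\lambda)m(1-\rho H/W)\big]\le m\sup|\bnabla\bar H| .
\]
This bounds $W$ at that point once $\lambda$ is large, and then $W\le C(R/r)^{\lambda}$ everywhere.
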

Throughout this paper we adopt the convention that the mean curvature is the arithmetic mean of the principal curvatures, with the sign convention chosen so that, for instance, the mean curvature of $\s^m(r)$ is $r^{-1}$. Equation \eqref{eq:prescription_treibergs_wei} is then a quasilinear elliptic equation for functions on the unit sphere which, in terms of spherical geometry, can be written as (see \cite{Lopez, TreibergsWei1983})
\[
-\div_{\sigma} \left(\frac{D u}{\sqrt{1+|D u|^2}}\right)
+ \frac{m}{\sqrt{1+|D u|^2}}
= m e^u \bar H(e^u q) \qquad \text{on } \s^m,
\]
where the divergence $\div_\sigma$ and the gradient $D$ are taken with respect to the standard metric $\sigma$ on $\s^m$.

It is worth noting that not every function $\bar H$ gives rise to a solution of \eqref{eq:prescription_treibergs_wei}, for the mean curvature $H$ of a \emph{closed} hypersurface $\Sigma$ must obey some geometrical constraint, one of which is the  Willmore inequality
\[
\int_{\Sigma} |H|^m \, dv \ge m\, |\mathbb{B}^m|,
\]
where $\mathbb{B}^m$ denotes the unit ball of $\mathbb{R}^m$ and $dv$ is the Riemannian volume form induced on $\Sigma$.

\bigskip

In this paper we consider spacelike radial graphs in the Lorentz--Minkowski space $\LM^{m+1}$, that is, $\mathbb{R}^{m+1}$ endowed with the scalar product
\begin{align*}
    \eta = \langle \cdot,\cdot \rangle = - (dx^0)^2 + (dx^1)^2 + \dots + (dx^m)^2.
\end{align*}
A vector $q\in \LM^{m+1}\setminus\{0\}$ is called \emph{spacelike}, \emph{timelike}, or \emph{lightlike} if $\langle q,q \rangle > 0$, $\langle q,q \rangle < 0$, or $\langle q,q\rangle = 0$, respectively. 
A hypersurface $\Sigma \subset \LM^{m+1}$ is said to be \emph{spacelike} if it is of class $C^1$ and the Minkowski metric $\eta$ induces a Riemannian metric on $\Sigma$; it is called \emph{entire} if it is the graph of a function $f:\mathbb{R}^m \to \mathbb{R}$. 

An example of an entire spacelike hypersurface is the future unit hyperboloid,
\begin{align*}
    \h^m := \{ q \in \LM^{m+1} \; ; \; \langle q,q\rangle = -1, \ \langle q, E_0 \rangle < 0 \},
\end{align*}
where $E_0 = (1,0,\dots,0)$.

It is well known that, if $h$ denotes the metric induced on the hyperboloid, then $(\h^m,h)$ is isometric to the hyperbolic space, hence the notation. 
We define the \emph{Lorentzian distance} (more properly, the \emph{time separation}) from a point $o\in\LM^{m+1}$ by
\begin{align}\label{eq:time_separation}
    \ell_o(q) := \sqrt{|\langle q-o,q-o\rangle|}, 
    \qquad q\in I^+(o),
\end{align}
where $I^+(o)$ denotes the chronological future of $o$ (see Section~\ref{sec:preliminaries}). 
When $o$ is the zero vector in $\LM^{m+1}$, we simply write $\ell$ in place of $\ell_0$. We then define the hyperboloids of radius $l>0$ as the level sets
\[
\h^m(l) := \{q\in I^+(0)\, ;\ \ell(q) = l \},
\]
so that $\h^m(1)=\h^m$. 
These hypersurfaces are the Lorentzian counterpart of the Euclidean spheres $\mathbb{S}^m(r)$. For instance, in accordance with our sign convention for the mean curvature, the hyperboloid $\h^m(l)$ has mean curvature equal to $l^{-1}$ (with respect to the future-pointing unit normal).
Motivated by this analogy, we call \emph{entire radial graphs} (centered at $o$) those hypersurfaces $\Sigma \subset \LM^{m+1}$ for which there exists a point $o\in \LL^{m+1}$ and a continuous function $u:\h^m\to\mathbb{R}$ such that
\[
\Sigma = \Sigma_u := \{ o + qe^{u(q)} \; ;\ q\in\h^m \}.
\]
In this case, $u$ is called the \emph{height function} of $\Sigma$. We say that $u$ is \emph{spacelike} if the associated hypersurface $\Sigma_u$ is spacelike.
For instance, the hyperboloid $\h^m(l)$ is the radial graph of the constant function $\ln l$ (and thus constant functions are spacelike).

While the notion of mean curvature for spacelike hypersurfaces is similar to the Euclidean one (see Section \ref{sec:preliminaries}), a direct analogue of closed hypersurfaces is not available in the Lorentz--Minkowski space, since no closed spacelike hypersurfaces exist (see, e.g., \cite[Proposition~2.5]{BIA2} or \cite{Lopez2014}).

A natural candidate is provided by hypersurfaces that are \emph{asymptotic to a future light cone}. 
These can be described as entire hypersurfaces of the form
\[
\Sigma = \{ (f(x),x)\, ;\ x\in\mathbb{R}^m\}
\]
such that
\begin{align}\label{eq:naive_ALC}
    |f(x) - |x|| \to 0, \qquad \text{as} \ |x|\to\infty.
\end{align}
However, there is a more geometric characterization, based on basic notions from causality theory (see Section~\ref{sec:preliminaries} for precise definitions).

\begin{Definition}
A topological hypersurface $\Sigma\subset\LM^{m+1}$ is said to be \emph{asymptotic to a future light cone} (\emph{ALC}, for short) if its domain of dependence is the chronological future of a point, that is, if $\mathcal{D}(\Sigma) = I^+(o)$ for some $o\in\LM^{m+1}$.
\end{Definition}

Spacelike ALC hypersurfaces share several features with closed hypersurfaces in $\mathbb{R}^{m+1}$; for instance, they have a surjective Gauss map (see Lemma~\ref{lem:surjective_gauss} below). 
By contrast, they are necessarily non-compact, which contributes to the intrinsic difficulty of the problem.

The questions addressed in this work concern the interplay between the mean curvature of an entire spacelike hypersurface $\Sigma$ and the asymptotic condition $\mathcal{D}(\Sigma)=I^+(o)$. 
More precisely, we seek reasonable assumptions on a function $\bar H$ defined on the ambient space that ensure existence and uniqueness of ALC spacelike hypersurfaces whose mean curvature coincides with the restriction of $\bar H$.

From a PDE viewpoint, this problem amounts to finding a function $u\in C^2(\h^m)$ satisfying 
\begin{equation}\label{eq:PDEprob}
\begin{cases}
\div_{h} \left(\frac{D u}{\sqrt{1-|D u|^2}}\right) + \frac{m}{\sqrt{1-|D u|^2}} = m e^u \bar H(e^u q) & \text{on } \h^m,\\
\mathcal{D}(\Sigma_u) = I^+(o).
\end{cases}
\end{equation}
where $\div_{h}$ and the gradient $D$ are taken with respect to the metric $h$ induced by $\eta$ on $\h^m$.

The interest in this problem is not purely geometric, since spacelike hypersurfaces with prescribed mean curvature play an important role in general relativity and in the Born--Infeld theory of electromagnetism. Moreover, many results have been obtained in the case of spacelike vertical (i.e., Cartesian) graphs (see \cite{BS1982, BIM2024, BDP, BIA, BIA3} and the references therein).

By contrast, to the best of our knowledge, the only available result concerning the existence of entire ALC spacelike radial graphs can be found in \cite{Bayard}, and it deals with the prescribed scalar curvature problem. 

Under assumptions analogous to those of Theorem~\ref{thm:treibergs_wei}, the authors of \cite{BIA2} proved the existence and uniqueness of solutions to the Dirichlet problem
\begin{equation}\label{eq:PDEprob2}
\begin{cases}
-\div_{h} \left(\frac{D u}{\sqrt{1-|D u|^2}}\right) 
+ \frac{m}{\sqrt{1-|D u|^2}} 
= m e^u \bar H(e^u q) & \text{in } \Omega,\\
u=0 & \text{on } \partial\Omega,
\end{cases}
\end{equation}
where $\Omega$ is a given smooth bounded domain of $\h^m$. 

Notice that the equation in \eqref{eq:PDEprob2} differs from \eqref{eq:PDEprob} by the sign of the divergence term. 
This discrepancy originates from an inaccuracy in the derivation of the mean curvature equation in \cite[Section 3]{BIA2}. 
Accordingly, the results obtained in \cite{BIA2} concern solutions of \eqref{eq:PDEprob2}.

\subsection{Main results}

To state our main result, we introduce the future-pointing normalized position vector field $T$ on the chronological future $I^+(o)$ of the origin $o$ (see Subsection~\ref{ss:radial}). 
If $u\in C^1(\h^m)$ is spacelike, let $N$ denote the future-pointing unit normal of its radial graph $\Sigma_u$. 
We define the \emph{tilt function} of $\Sigma_u$ with respect to $T$ by
\[
    w_u := - \langle N,T \rangle.
\]
The tilt function measures the hyperbolic angle between $N$ and $T$; in fact, it is the hyperbolic cosine of that angle. A direct computation shows that
\[
w_u=\frac{1}{\sqrt{1-|D u|^2}}.
\]

In what follows, H\"older norms of functions on $I^+(o)$ are computed with respect to the Riemannian metric $\eta_E$ defined by
\[
    \eta_E = \eta + 2 T^\flat\otimes T^\flat.
\]

\begin{Theorem}\label{thm:main}
Fix a point $o\in \LM^{m+1}$ and let $\bar H\in C^{1,\alpha}(I^+(o))$ be a given positive function. Assume that
\begin{enumerate}[label=(H\arabic*), ref=\textit{H\arabic*}]
    \item\label{hp:1} $T\big(\ell_o \bar H(q\ell_o)\big) \ge 0$ for all $q\in\h^m$,
    \item\label{hp:2} $\Vert \bar H\Vert_{C^1(I^+(o))} \le \Lambda$ for some $\Lambda\ge 0$,
    \item\label{hp:3} there exist two radii $0<l\le 1 \le L$ such that
    \[
        \bar H(ql) < \frac{1}{l}, \qquad 
        \bar H(qL) > \frac{1}{L}, 
        \qquad \text{ for all } q\in\h^m\,.
    \]
\end{enumerate}
Then there exists a function $u\in C^{2,\alpha}(\h^m)$ such that its radial graph $\Sigma_u$ is spacelike and satisfies
\begin{align}\label{eq:problema}
\begin{cases}
H_u = \bar H|_{\Sigma_u}, \\
\mathcal{D}(\Sigma_u) = I^+(o).
\end{cases}
\end{align}
Moreover, $w_u\in L^\infty(\h^m)$ and $\Sigma_u$ lies between $\h^m(l)$ and $\h^m(L)$; that is,
\[
\ln l \le u \le \ln L\,.
\]
\end{Theorem}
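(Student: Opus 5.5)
The strategy is to exhaust $\h^m$ by geodesic balls $B_k=B_k(E_0)$ and solve Dirichlet problems on each one. We then obtain uniform a priori estimates independent of $k$ and pass to the limit. Finally we show that the limiting graph is ALC.

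\textbf{Approximating problems.} For each $k$ we solve on $B_k$ the equation
\[
\div_{h} \Big(\frac{D u}{\sqrt{1-|D u|^2}}\Big) + \frac{m}{\sqrt{1-|D u|^2}} = m e^u \bar H(e^u q),
\]
with a boundary value $u=\phi_k$ on $\partial B_k$. We take $\phi_k$ to be a constant in $[\ln l,\ln L]$, or more naturally the boundary datum that makes $\Sigma_{u_k}$ coincide with the light cone near infinity.

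\emph{Sub- and supersolutions.} Hypothesis (\ref{hp:3}) makes the constant functions $\ln l$ and $\ln L$ barriers, because $\h^m(l)$ and $\h^m(L)$ have mean curvature $1/l$ and $1/L$. Hypothesis (\ref{hp:1}) is the monotonicity condition that, after the substitution $u\mapsto u$, makes the zero-order term $e^u\bar H(e^uq)$ nondecreasing in $u$. This gives a comparison principle on bounded domains. Hence $\ln l\le u_k\le \ln L$, and Dirichlet solutions are unique.

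\emph{Existence on $B_k$.} We use the Leray--Schauder/continuity method as in \cite{BIA2}, but with the correct sign of the divergence term. The new ingredients are strict spacelikeness, i.e. a bound $|Du_k|\le 1-\theta_k$, and a boundary gradient estimate. For the boundary estimate we use barriers built from hyperboloids $\h^m(l)$ and $\h^m(L)$ translated along timelike directions, or equivalently spacelike CMC hypersurfaces through $\partial B_k$.

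\textbf{Uniform interior estimates.} The key point is a bound on the tilt function $w_{u_k}$ on compact sets, uniform in $k$. We derive it through a Bartnik-type interior gradient estimate. We compute $\Delta w$ on $\Sigma$ using the Jacobi-type identity
\[
\Delta \langle N,T\rangle = |A|^2\langle N,T\rangle + m\,T^\top(H) + (\text{terms from } \nabla T),
\]
where the terms from $\nabla T$ are controlled because $T$ is the gradient of $\ell_o$ and $\ell_o$ is pinched between $l$ and $L$ on the region considered. We then apply the maximum principle to $\psi w$, where $\psi$ is a cutoff built from the time function. The $C^1$ bound (\ref{hp:2}) controls the $T^\top(H)$ term. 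This gives a local bound $w\le C(K,\Lambda,l,L)$, that is, local uniform ellipticity.

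With that bound in hand, De Giorgi--Nash and Schauder theory yield uniform $C^{2,\alpha}_{\mathrm{loc}}$ bounds. A diagonal argument then produces a limit $u\in C^{2,\alpha}(\h^m)$ solving the equation with $\ln l\le u\le\ln L$.

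\textbf{Global bound on $w_u$.} The invariance of the problem under Lorentz boosts fixing $o$ is the natural tool here, since the geometry of $\h^m$ is homogeneous. The local estimate depends only on $\Lambda,l,L$ and on the size of the cutoff region, not on the center point. Recentering at each point with an isometry of $\h^m$ (a boost) therefore upgrades the local bound to $\sup_{\h^m} w_u<\infty$. One must check that the Hölder norms with respect to $\eta_E$ are boost-compatible, or else work in the radial coordinates $(\ln\ell_o,q)$, where boosts act isometrically.

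\textbf{ALC property.} It remains to show that $\mathcal D(\Sigma_u)=I^+(o)$. Since $\Sigma_u$ lies in the slab between $\h^m(l)$ and $\h^m(L)$, it is contained in $I^+(o)$. Every inextendible causal curve in $I^+(o)$ must cross both hyperboloids, since each of them is ALC with vertex $o$. Because $\Sigma_u$ separates these two hyperboloids, any such curve meets $\Sigma_u$, which gives $I^+(o)\subset\mathcal D(\Sigma_u)$. Conversely, $\Sigma_u$ is achronal and contained in $I^+(o)$, and a point outside $I^+(o)$ admits a past inextendible causal curve (close to a null ray) avoiding the slab. This shows that $\mathcal D(\Sigma_u)$ equals $I^+(o)$ exactly.

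\textbf{Main obstacle.} I expect the interior tilt estimate independent of $k$ to be the hardest step. It is where (\ref{hp:2}) enters and where the non-compactness of the problem must be handled.
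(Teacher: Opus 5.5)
Your architecture is sound, and it matches the paper's except at the decisive estimate. The shared pieces are: exhaustion by balls, height bounds from \eqref{hp:1} and \eqref{hp:3}, a Bartnik-type bound on $w$, regularity, a diagonal limit, and the sandwich argument for ALC, which is Lemma~\ref{L:ALCf}. Your De Giorgi--Nash/Schauder step in place of the Kuusi--Mingione estimates is also fine.

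The paper does not localize the tilt estimate at all. It checks that \eqref{hp:2} gives $|H|\le\Lambda$ and $|\nabla H|\le\Lambda w$, and then solves the Dirichlet problems with $u=0$ on $\partial B_j$ by quoting Bartnik's existence theorem \cite{Bartnik88}. Lemma~\ref{prop:gradient_estmates} then bounds $w$ on the whole closed ball $\overline{B_r}$, uniformly in $r>R$. It applies the maximum principle to $\psi_{\pm\lambda}=we^{\pm\lambda u}$ with no cutoff, which is possible because $u$ is trapped in $[\ln l,\ln L]$. Interior maxima are handled with the formula for $\Delta_g w$, as you propose. If both maxima lie on $\partial B_r$:
\begin{itemize}
\item constancy of $u$ there makes $\psi_\lambda$ and $\psi_{-\lambda}$ proportional;
\item the normal derivative at the common maximum gives $|\lambda| w|\nabla u|^2\le|\langle\nabla u,\nabla w\rangle|$;
\item this is contradicted for large $|\lambda|$, using the bounded mean curvature vector of the round codimension-two sphere $F_u(\partial B_r)$.
\end{itemize}
The limit then inherits $|Du|\le 1-\theta$ directly. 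Boosts enter the paper only to make H\"older bounds uniform (Lemma~\ref{lemma:mingione}(iii)).

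Your route trades this boundary analysis for a cutoff plus boost homogeneity. The gain is that it is indifferent to the boundary data and to the shape of the exhausting domains, since it only needs bounds away from $\partial B_k$. The cost is that everything rests on an explicit localizing cutoff, and you leave it unspecified.

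That cutoff is the one point you must make precise. A cutoff built from the time function $\tau=\ln\ell_o$ cannot work. On $\Sigma_u$ one has $\tau=u\in[\ln l,\ln L]$ everywhere, so it does not localize in the $\h^m$-directions and does not vanish near $\partial\Sigma_{u_k}$. What works is Bartnik's Lorentzian-distance cutoff:
\begin{itemize}
\item For a centre $x\in\h^m$, take $q=ax$ with $0<a<l$, and use $-\langle p-q,p-q\rangle$ on the causal future $J^+(q)$.
\item Since $\Sigma_{u_k}\subset\{l\le\ell_o\le L\}$, the set $\Sigma_{u_k}\cap J^+(q)$ lies in the compact set $J^+(q)\cap\{\ell_o\le L\}$.
\item The radial projection of that set is contained in a hyperbolic ball $B_{R_0}(x)$ with $R_0=R_0(a,L)$.
\item The cutoff equals $(e^{u_k(x)}-a)^2\ge(l-a)^2$ at the point $e^{u_k(x)}x$.
\item All the data are invariant under boosts fixing $o$: $T$, the metric $\eta_E=d\ell_o^2+\ell_o^2\bar h$, the slab, and \eqref{hp:2}.
\end{itemize}
Hence the interior estimate holds with the same constant at every $x$ with $B_{R_0}(x)\subset B_k$, which is exactly what your ``global bound'' paragraph needs.

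Two minor points. First, drop the alternative boundary datum ``coinciding with the light cone''; the cone is not a radial graph over $\h^m$. Keep a constant in $[\ln l,\ln L]$, such as $0$. Second, rather than adapting the Leray--Schauder scheme of \cite{BIA2}, you can quote Bartnik's existence theorem for the problems on $B_k$, as the paper does.
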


This result is almost a direct analogue of the existence statement in Theorem~\ref{thm:treibergs_wei}, except for hypothesis~\eqref{hp:1}, which is not needed in the Euclidean case because, unlike $\h^m$, $\s^m$ is compact.

 To prove the result, we solve Dirichlet problems on a family of geodesic balls forming an exhaustion of $\h^m$, and conclude via a diagonal argument. The main difficulty is to control the ellipticity of the operator, which depends on $\|w_u\|_{L^\infty}$, uniformly as the radius tends to infinity. A key ingredient is therefore a gradient estimate uniform in the radius, established in Lemma~\ref{prop:gradient_estmates} and inspired by Bartnik's seminal work \cite{Bartnik88}. We stress that the gradient estimates of \cite{BIA2} cannot be applied in this setting, as they depend on the diameter of the domain.

Treating ALC hypersurfaces as graphs over $\R^m$ is not a viable alternative. Indeed, if $\Sigma$ is the spacelike graph of $f:\R^m\to\R$ (so that $|\nabla_\delta f|<1$, where $\grad$ denotes the Euclidean gradient), its mean curvature is given by
\begin{align*}
    mH = \div_\delta\!\left(\frac{\nabla_\delta f}{\sqrt{1-|\nabla_\delta f|^2}}\right)\,.
\end{align*}
Uniform ellipticity on all of $\R^m$ is then equivalent to the existence of $\theta>0$ such that $|\nabla_\delta f|\leq 1-\theta$ on $\R^m$. However, this condition is incompatible with the ALC behavior \eqref{eq:naive_ALC}, which forces $|\nabla_\delta f|\to 1$ at infinity. By contrast, when $\Sigma$ is realized as the radial graph of $u:\h^m\to\R$, the ALC condition imposes much milder constraints. In particular, it does not force $|Du|$ to approach $1$, and uniform ellipticity is ensured. The trade-off is that the geometric framework becomes somewhat more involved.

\bigskip

A simple class of functions satisfying hypotheses \eqref{hp:1}--\eqref{hp:3} is given by functions that are constant along Lorentzian rays, namely functions of the form
\[
\bar H(\ell q) = \mathcal{H}(q),
\]
for some positive function $\mathcal{H}\in C^{1,\alpha}(\h^m)$ such that
\begin{align}\label{eq:boundedness_H}
    0 < \inf_{\h^m} \mathcal{H} \le \sup_{\h^m} \mathcal{H} < \infty\,.
\end{align}
In this case, the mean curvature of any solution to \eqref{eq:problema} coincides with $\mathcal{H}$.

This naturally raises the question of whether the boundedness assumption \eqref{eq:boundedness_H} is really necessary for the existence of ALC hypersurfaces with prescribed mean curvature. In particular, one may ask whether existence results can still hold when the mean curvature is allowed to decay at infinity, for instance when it is compactly supported or tends to zero.

At present we are not able to give a complete answer to this question. However, we shall return to this issue later and derive a general constraint on the mean curvature of spacelike hypersurfaces.

\bigskip

We now turn to the issue of uniqueness. Within the class of spacelike hypersurfaces bounded between two hyperboloids and with globally bounded tilt function, which includes the solutions constructed above, uniqueness holds. More precisely:

\begin{Theorem}\label{thm:uniqueness}
Fix a point $o\in \LM^{m+1}$ and let $\bar H\in C^{1}(I^+(o))$ be such that 
    \begin{enumerate}[label=(H\arabic*'), ref=\textit{H\arabic*'}]
        \item\label{hp:1'} $T(\ell_o \bar H(q\ell_o)) \geq c >0$ for all $q\in \h^m$,
        \item\label{hp:2'} $\|\bar H\|_{C^{0,\alpha}(I^+(o))} \leq \Lambda$ for some $\Lambda\geq 0$, $\alpha \in (0,1)$,
    \end{enumerate}
    and let $u,v\in C^2(\h^m)$ be bounded spacelike functions such that $w_u,w_v\in L^\infty(\h^m)$. If $u$ and $v$ satisfy \eqref{eq:problema}, then $u = v$.
\end{Theorem}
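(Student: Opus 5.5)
Since $\h^m$ is noncompact, a plain maximum principle cannot be applied directly; I would run a comparison argument based on the Omori--Yau maximum principle (or an equivalent perturbation device), exploiting the dilation structure of the equation. Write the equation in \eqref{eq:PDEprob} as $Q[u]:=\div_h(w_u Du)+m w_u - m e^u\bar H(e^u q)=0$, and set $F(q,s):=e^s\bar H(e^s q)$. Hypothesis \eqref{hp:1'} says exactly that $\partial_s F(q,s)\ge c\,e^{0}$ in the sense $\partial_s\big(\ell_o\bar H(q\ell_o)\big)=\ell_o T(\ell_o\bar H)\ge c\,\ell_o$. Since $u,v$ are bounded, $\ell_o=e^s$ ranges in a compact interval $[e^{-M},e^{M}]$. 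Hence $\partial_s F\ge c_0>0$ uniformly on the relevant range, which is the strict monotonicity that replaces compactness.

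Assume by contradiction $\sup(u-v)=:\delta>0$ (otherwise swap $u,v$). Subtracting the two equations gives a linear equation for $\phi=u-v$. Using $w_u Du-w_vDv=A(q)(Du-Dv)$ with $A=\int_0^1 D_\xi(w(\xi)\xi)\,dt$, which is uniformly positive definite and bounded because $w_u,w_v\in L^\infty$ (uniform ellipticity), I obtain
\[
\div_h(A\nabla\phi)+ b\cdot\nabla\phi = m\big(F(q,u)-F(q,v)\big)\ge m c_0\,\phi \quad\text{where } \phi>0,
\]
with $b$ bounded, coming from $m(w_u-w_v)$, which is Lipschitz in the gradient with bounded constant. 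The difficulty is that $A$ involves $D^2$ only through the divergence form. I would therefore rewrite it in nondivergence form, $\div_h(w_uDu)=w_u\Delta_h u+w_u^3\,D^2u(Du,Du)$, and the difference then contains $D^2v$ terms multiplied by $(Du-Dv)$. To control these I need $\|v\|_{C^2}$ bounded on $\h^m$. This follows from uniform ellipticity together with \eqref{hp:2'} and interior Schauder estimates on unit geodesic balls: $\h^m$ is homogeneous, so constants are uniform, since De Giorgi--Nash gives $C^{1,\beta}$ first and then Schauder gives $C^{2,\alpha}$ via the H\"older bound on $\bar H$. With bounded coefficients I get a linear operator $L\phi=a^{ij}\phi_{ij}+b^i\phi_i$, uniformly elliptic with bounded coefficients, satisfying $L\phi\ge mc_0\phi$ on $\{\phi>0\}$.

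Next I apply Omori--Yau on $\h^m$, which has Ricci curvature bounded below; in the nondivergence version with bounded elliptic coefficients, one uses the barrier $\rho=$ hyperbolic distance, whose Hessian is bounded away from the origin. This yields points $q_k$ with $\phi(q_k)\to\delta$, $|\nabla\phi(q_k)|\to0$, and $\limsup a^{ij}\phi_{ij}(q_k)\le 0$. Concretely, I maximize $\phi-\varepsilon\sqrt{1+\rho^2}$, which attains its max, and estimate $L$ of the barrier by $C\varepsilon$. Evaluating at $q_k$ gives $mc_0\phi(q_k)\le L\phi(q_k)\le o(1)$, hence $\delta\le0$, a contradiction. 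Symmetrically $v\le u$, so $u=v$.

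The main obstacle is the global $C^2$ (really $C^{2}$-bound on $v$, or some weaker replacement) needed to make the linearized coefficients bounded. If Schauder bounds prove delicate, I would first try to avoid them by working in divergence form with a weak Omori--Yau / integral argument: test with $(\phi-\delta+\varepsilon)^+\zeta^2$ for cutoffs $\zeta$. This works provided the exponential volume growth of $\h^m$ can be beaten by the positive zeroth-order term $mc_0\phi$, which requires care.
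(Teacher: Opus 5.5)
Your proposal is correct and follows essentially the same route as the paper: global $C^{2,\alpha}$ bounds for $u$ and $v$ from uniform ellipticity, (H2') and the homogeneity of $\h^m$. The paper gets the $C^{1,\alpha}$ step from Kuusi--Mingione, where you use classical De Giorgi--Nash-type estimates; it then applies Omori--Yau to $u-v$ in the nondivergence form of the equation and uses the strict monotonicity of $t\mapsto e^t\bar H(e^t q)$ to reach a contradiction, exactly as you do. Your remark that (H1') only yields $\partial_t\big(e^t\bar H(e^tq)\big)\ge c\,e^{t}$, so a uniform positive lower bound needs the boundedness of $u$ and $v$, is slightly more careful than the paper, which treats (H1') as equivalent to $\partial_t\big(e^t\bar H(e^t q)\big)\ge c$.
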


In general, uniqueness issues for entire spacelike hypersurfaces are quite delicate; see, for instance, the celebrated paper of Treibergs \cite{Treibergs1982}, where entire spacelike constant mean curvature hypersurfaces are classified by their projective boundary values at infinity. What prevents us from obtaining a fully satisfactory uniqueness result is the lack of an \emph{a priori} estimate for the tilt function, which is, in fact, a global gradient estimate for the height function.

When $\bar H$ satisfies the monotonicity condition $E_0(\bar H) \geq 0$, 
uniqueness follows by a comparison argument on compact sets, as in 
\cite{BaySep}. The hypothesis \eqref{hp:1'}, monotonicity along the $T$ 
direction, is instead tailored to the symmetries of the problem: in the 
limiting case $T(\ell_o \bar H(q\ell_o)) \equiv 0$ for every $q \in \h^m$, 
the equation is invariant under Lorentzian dilations centered at $o$, and 
any two bounded spacelike solutions with $w \in L^\infty(\h^m)$ differ by 
such a dilation.

The proof of Theorem~\ref{thm:uniqueness} relies on the combination of several tools, including the Omori--Yau maximum principle, Schauder estimates, and the universal potential estimates of Kuusi and Mingione \cite{mingione}.

\bigskip

We now return to the question raised above concerning the possible decay of the mean curvature at infinity. The next result provides a global constraint in this direction.

Let $\Sigma$ be a spacelike hypersurface. Its future-pointing unit normal induces the \emph{Gauss map}
\[
N:\Sigma \to \h^m.
\]
Let $A$ denote the shape operator of $\Sigma$, and let $\Sigma^+\subseteq \Sigma$ be the set of points where $A$ is positive definite. The following inequality can be interpreted as a Willmore-type inequality for spacelike hypersurfaces.

\begin{Theorem}\label{thm:willmore}
    Let $\Sigma\subset\LM^{m+1}$ be an entire $C^2$ spacelike hypersurface, set $E_0 := (1,0,\dots,0)$ and let $\phi := - \langle N,E_0\rangle$ be the tilt function with respect to $E_0$. If $N:\Sigma^+ \to \h^m$ is surjective, then
    \begin{align*}
        \int_{\Sigma}\abs H^m \phi^{-m-1} \, dv_\Sigma  \geq |\B^m|,
    \end{align*}
    with equality occurring if and only if $\Sigma$ is a hyperboloid.
\end{Theorem}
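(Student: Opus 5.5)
We argue via the area formula for the Gauss map, mirroring the classical proof of the Euclidean Willmore inequality. On $\Sigma^+$ the shape operator $A$ is positive definite, so the Jacobian of $N:\Sigma\to\h^m$ with respect to $dv_\Sigma$ and the hyperbolic volume $dv_h$ is $\det A=K>0$. Since $N|_{\Sigma^+}$ is surjective, the area formula gives
\[
\int_{\Sigma^+} K\, F(N)\, dv_\Sigma \;\ge\; \int_{\h^m} F\, dv_h
\]
for any nonnegative measurable $F$ on $\h^m$, with equality iff $N|_{\Sigma^+}$ is injective a.e. By the AM--GM inequality, on $\Sigma^+$ we have $0<K\le H^m$, with equality at umbilic points. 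Restricting to $\Sigma^+\subseteq\Sigma$ then yields
\[
\int_\Sigma |H|^m \phi^{-m-1}\,dv_\Sigma\;\ge\;\int_{\Sigma^+} K\,\phi^{-m-1}\,dv_\Sigma .
\]

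The weight is chosen so that the right-hand side becomes a Euclidean volume. Note that $\phi=-\langle N,E_0\rangle=\cosh(\text{dist}_{\h^m}(N,E_0))$ depends only on $N$. Take $F(y)=(-\langle y,E_0\rangle)^{-m-1}$ on $\h^m$. In the Klein/projective model, radial projection $y\mapsto y/y^0$ sends $\h^m$ onto the unit ball $\B^m\subset\{x^0=1\}$, and a direct computation shows that the pullback of Lebesgue measure is $(y^0)^{-m-1}dv_h$. Indeed, $dv_h=\cosh^{m-1}r\,\sinh^{... }$; more precisely, with $\rho=\tanh r$ one has $d\rho\,\rho^{m-1}=\cosh^{-2}r\tanh^{m-1}r\,dr=\cosh^{-m-1}r\,\sinh^{m-1}r\,dr$. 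Hence
\[
\int_{\h^m}(-\langle y,E_0\rangle)^{-m-1}dv_h=|\B^m|.
\]
Alternatively, this weight is what one gets by viewing $\Sigma$ as the graph of $f$ over $\R^m$. The Legendre-type map $x\mapsto\nabla_\delta f(x)\in\B^m$ has Jacobian $\det\hess f$, and one can check that $\det\hess f\,dx=K\phi^{-m-1}dv_\Sigma$ using $dv_\Sigma=\phi^{-1}dx$ and $K=\phi^{m+2}\det\hess f$. This gives a second route: $\nabla_\delta f(\Sigma^+)=\B^m$ and $\int_{\Sigma^+}\det\hess f\,dx\ge|\B^m|$. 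I would present whichever computation is cleaner, probably the graph one, since the $\phi^{m+2}$ factor in $K$ is standard.

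Combining these gives the inequality. For the equality case, equality forces three things. First, $|H|^m=K$ on $\Sigma^+$, so $\Sigma^+$ is totally umbilic. Second, $H=0$ on $\Sigma\setminus\Sigma^+$ up to measure zero. Third, $N$ is a.e. injective on $\Sigma^+$. The main obstacle is showing that this forces $\Sigma^+=\Sigma$. On the open set $\Sigma^+$, umbilicity with $\kappa>0$ gives, by the standard Codazzi argument ($m\ge2$), that $\kappa$ is locally constant and that each component is an open piece of a hyperboloid $\h^m(1/\kappa)$ translated. Such a piece maps diffeomorphically onto an open subset of $\h^m$. Surjectivity together with injectivity then forces a single component whose Gauss image is all of $\h^m$; that is, the piece is a full hyperboloid. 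A full hyperboloid is already a complete entire graph, so $\Sigma^+=\Sigma$ by connectedness and the entire-graph property, meaning $\Sigma$ is a hyperboloid. Conversely, for $\h^m(l)$ we have $H=1/l$ and $K=l^{-m}$, so equality holds trivially.
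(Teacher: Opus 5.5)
Your proof is correct. The inequality is the paper's argument: the Klein projection $y\mapsto y/y^0$ of $N=\phi(E_0+\grad f)$ is exactly $\grad f$. So your weighted area formula for the Gauss map, with $F(y)=(y^0)^{-m-1}$, and the paper's area formula for $\grad f:\Sigma^+\to\B^m$ are the same computation. The paper's version uses $\det\hess f=\phi^{-m-2}K$, $dv_\Sigma=\phi^{-1}dx$ and AM--GM. (The stray line $dv_h=\cosh^{m-1}r\ldots$ should simply read $dv_h=\sinh^{m-1}r\,dr\,d\omega$; the computation you give after it is right.)

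The equality case is where you genuinely differ. The paper notes injectivity of $\grad f$ but does not use it. Instead it uses $H=0$ off $\Sigma^+$ and $\second=Hg$ on $\Sigma^+$. Codazzi then shows that $P=\{H>0\}$ is open and nonempty with $H$ locally constant, and continuity makes it closed. Hence $P=\Sigma$, $H$ is a positive constant, and $q-N(q)/H$ is a fixed point $o$.

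You discard the information $H=0$ off $\Sigma^+$ and exploit a.e. injectivity instead. Each component of $\Sigma^+$ is a piece of a hyperboloid with open Gauss image. These images are pairwise disjoint and cover $\h^m$, so connectedness of $\h^m$ leaves a single component with full Gauss image. That component is a whole hyperboloid and hence all of $\Sigma$.

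Your route gives a clean global picture (the Gauss images tile $\h^m$) and avoids the open--closed argument on $\Sigma$. The paper's route is local and does not care whether $\Sigma^+$ means the positive definite set or $\{\second\ge 0\}$; the latter is the set Lemma~\ref{lem:surjective_gauss} actually produces for ALC hypersurfaces.

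Your cover argument does rely on $\Sigma^+$ being open, i.e.\ the positive definite reading given in the introduction. Under the semidefinite reading, the degenerate points have null Gauss image, so the images of the umbilic components cover $\h^m$ only up to measure zero. Disjoint open sets of full measure need not reduce to one. To conclude you would have to bring in $H=0$ off the umbilic set plus continuity, which is essentially the paper's argument.
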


Notice that the choice of $E_0$ is conventional; the same inequality, with the same constant, holds for any future-pointing unit timelike vector. As shown later (see Lemma \ref{lem:surjective_gauss}), if $\Sigma$ is ALC then the Gauss map $N:\Sigma^+\to \h^m$ is always surjective and thus Theorem \ref{thm:willmore} applies.

Although the inequality above is sharp, the presence of the tilt function, whose behaviour is not known a priori, makes it difficult to exploit it directly to restrict the possible mean curvatures of ALC hypersurfaces. The following theorem complements this picture by providing an integrability obstruction. We note that it is a corollary of a more general statement (see Lemma \ref{lem:interp}).

\begin{Theorem}\label{thm:L^p}
    Suppose $\Sigma\subset\LM^{m+1}$ is an entire $C^2$ spacelike hypersurface such that $N:\Sigma^+\to\h^m$ is surjective. Let $H$ denote its mean curvature. If $H$ is bounded, then $\abs H^p$ is not integrable with respect to $dv_\Sigma$ for any $1 \leq p\leq m$. 
\end{Theorem}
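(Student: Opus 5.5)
The plan is to compare the mean curvature with the Jacobian of the Gauss map on $\Sigma^+$, and to use that $\h^m$ has infinite volume. The case $p=m$ is the main one; the case $p<m$ then follows from boundedness of $H$.

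\emph{Jacobian of the Gauss map.} Since $\Sigma$ is $C^2$, the Gauss map $N:\Sigma\to\h^m$ is $C^1$. Its differential at $x\in\Sigma$ is the shape operator, $dN_x = A_x$ (up to a sign fixed by the convention), viewed as a map $T_x\Sigma\to T_{N(x)}\h^m$. Both tangent spaces are $N(x)^\perp$, so this identification is isometric for the metrics induced by $\eta$. Hence the Jacobian of $N$ with respect to $dv_\Sigma$ and the hyperbolic volume $dv_{\h^m}$ is $|\det A|$.

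\emph{Area formula.} Applying the area formula to $N|_{\Sigma^+}$, which is $C^1$ on the open set $\Sigma^+$ and surjective onto $\h^m$ by hypothesis, gives
\[
\int_{\Sigma^+} \det A \, dv_\Sigma = \int_{\h^m} \#\big(N^{-1}(y)\cap\Sigma^+\big)\, dv_{\h^m}(y) \ \ge\ \mathrm{vol}_h(\h^m) = +\infty .
\]

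\emph{Comparison with $H$.} On $\Sigma^+$ the principal curvatures $\kappa_1,\dots,\kappa_m$ are positive, and with the paper's sign convention $H=\frac1m\sum_i\kappa_i>0$ there. The arithmetic--geometric mean inequality gives $\det A=\prod_i\kappa_i\le H^m$. Therefore
\[
\int_\Sigma |H|^m\,dv_\Sigma \ \ge\ \int_{\Sigma^+} H^m\,dv_\Sigma \ \ge\ \int_{\Sigma^+}\det A\,dv_\Sigma = +\infty,
\]
which proves the case $p=m$. Note that boundedness of $H$ is not needed for this case.

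\emph{Case $1\le p<m$.} Since $H$ is bounded, $|H|^m\le \|H\|_{L^\infty}^{m-p}\,|H|^p$ pointwise. If $|H|^p$ were integrable, then $|H|^m$ would be integrable too, contradicting the case $p=m$. (If $H\equiv0$, the first step already gives a contradiction, since then $\Sigma^+=\emptyset$ cannot map onto $\h^m$.)

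\emph{Main obstacle.} The only delicate point is justifying the area formula on the possibly non-compact open set $\Sigma^+$. I would apply it on an exhaustion of $\Sigma^+$ by compact sets and use monotone convergence. Both sides are nonnegative, so this causes no difficulty.
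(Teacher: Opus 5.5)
Your proof is correct and follows essentially the same route as the paper. Both use the area formula for the Gauss map, whose Jacobian is $K=\det A$, together with the AM--GM bound $K\le H^m$ on $\Sigma^+$ and the infinite volume of $\h^m$ to get $H\notin L^m$, and then pass to $p<m$ via boundedness of $H$; the paper merely organizes this through geodesic balls $B_\rho$ (the estimate $\|H\|_{L^m(B_\rho)}\ge |N(B_\rho^+)|^{1/m}$) and a general interpolation with $L^q$, $q\in(m,\infty]$, of which your pointwise bound $|H|^m\le \|H\|_{L^\infty}^{m-p}|H|^p$ is the $q=\infty$ case.
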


Thus condition \eqref{eq:boundedness_H}, while probably not necessary, cannot be relaxed easily. 
In particular, if $H$ vanishes at infinity, its decay cannot be too fast. 
In any case, Theorem~\ref{thm:L^p} rules out a large class of functions, including those with compact support.

\bigskip

The paper is organized as follows. In Section \ref{sec:preliminaries} we recall the basic facts on causal theory and on the geometry of entire spacelike hypersurfaces and radial graphs in the Lorentz--Minkowski space. Section \ref{sec:Willmore} contains the proof of the Willmore-type inequality and of the associated non-integrability obstruction. In Section \ref{sec:mainest}, we establish the key gradient estimate. Finally, Section \ref{sec:proofs} contains the proofs of the existence and uniqueness theorems.

\subsection*{Notation}
We list some of the notation used throughout the paper.

\begin{itemize}[left=0pt]
\item[$-$] We denote by $\eta$ the Minkowski and by $\bnabla$ the Levi-Civita connection on $\LM^{m+1}$. 
\item[$-$]  We denote by $h$ the metric on $\h^m$ induced by $\eta$, and by $D$ the associated Levi-Civita connection on $\h^m$.
\item[$-$] For a smooth function $\varphi:\h^m \to \R$, in a local chart $\{x^i\}$ with frame $\{\partial_i\}$, we write $\varphi_i = \partial_i \varphi$, $\varphi_{ij} = \partial_i \partial_j \varphi$ and so on. Indices are always raised and lowered with respect to $h$, e.g.\ $\varphi^i = h^{ij} \varphi_j$. 
\item[$-$] Latin indices range from $1$ to $m$, whereas Greek indices range from $0$ to $m$.

\item[$-$] We denote by $\delta$, or simply by $\cdot$, the Euclidean metric on $\R^m$, and by $\grad$ the Euclidean gradient.

\item[$-$] Given a radial graph hypersurface in $\LM^{m+1}$, we denote by $g$ the induced graph metric, and by $\nabla$ its Levi-Civita connection.

\item[$-$] Tensor norms are denoted by $|\cdot|_g$, $|\cdot|_h$, $|\cdot|_\delta$. The subscript is omitted whenever the underlying metric is clear from the context. For instance, we write $|Du|$ instead of $|D u|_h = \sqrt{h^{ij}u_iu_j}$ and $|\nabla u|$ instead of $|\nabla u|_g=\sqrt{g^{ij}u_iu_j}$.

\item[$-$] Given a vector field $X$, we denote by $X^\flat$ the unique $1$-form such that $X^\flat (V) = \langle X, V \rangle$ for every vector field $V$.

\end{itemize}

\section{Preliminaries}\label{sec:preliminaries}

In this section, we fix the notation and, for the sake of completeness, recall some well-known results from causal theory, entire spacelike hypersurfaces, and spacelike radial graphs. In the final part, we introduce the notion of a hypersurface asymptotic to a light cone (ALC, for short) and show that ALC hypersurfaces are both entire and can be represented as radial graphs.

\subsection{Basic causal theory}

Let $\eta$ denote the Minkowski metric on $\LM^{m+1}$ with signature $(-,+,\dots,+)$, that is,
\begin{align*}
    \eta = \langle \cdot,\cdot \rangle = - (dx^0)^2 + (dx^1)^2 + \dots + (dx^m)^2\,.
\end{align*}
Recall that a vector $q \in \LM^{m+1}$ is called \emph{spacelike} if $\langle q,q \rangle > 0$ or $q = 0$. It is called \emph{timelike} if $\langle q,q \rangle < 0$, and \emph{null} (or lightlike) if $\langle q,q \rangle = 0$ with $q \neq 0$. A vector is said to be \emph{causal} if it is either timelike or null.

\begin{Definition}
    A hypersurface $\Sigma\subset\LM^{m+1}$ is \emph{spacelike} if it is a $C^1$ embedded submanifold and the induced metric is Riemannian, or equivalently, if every tangent vector to $\Sigma$ is spacelike.
\end{Definition}
Let $(t, x) \in \R \times \R^m$ denote a point in $\LM^{m+1}$. 
If $\Sigma$ is spacelike then for every $(t_0,x_0)\in\Sigma$ one has
\begin{equation}\label{eq:cone}
|t - t_0| < |x-x_0| \qquad \text{ for all }(t, x) \in \Sigma \setminus\{(t_0, x_0)\}\,.
\end{equation}

We fix a time orientation on $\LM^{m+1}$ by declaring that a causal vector $q=(q^0,q^1,\dots,q^m)$ is \emph{future-pointing} if $q^0>0$ or, equivalently, if $\langle q,E_0 \rangle < 0$, where $E_0 = (1,0,\dots,0)$. If $q^0 <0$, or $\langle q,E_0 \rangle > 0$, we say that $q$ is \emph{past-pointing}. The \emph{chronological future} of a point $o\in\LM^{m+1}$ is 
\begin{align*}
    I^+(o)=\{\,o+q \; ; \;   q \text{ is timelike and future-pointing}\,\}\,.
\end{align*}

A differentiable curve $\gamma: I \subseteq \mathbb{R} \to \LM^{m+1}$ is said to be \emph{causal} and \emph{future-pointing} (respectively, \emph{past-pointing}) if its velocity vector $\dot{\gamma}(t)$ is causal and future-pointing (respectively, past-pointing) for all $t \in I$. A future-pointing curve $\gamma$ is said to be \emph{inextendible in the future} if $\gamma(t)$ does not converge to any point in $\LM^{m+1}$ as $t \to \sup \,  I$. Inextendibility in the past is defined analogously, and
 $\gamma$ is called \emph{inextendible} if it is inextendible both in the future and in the past.

\subsection{Entire spacelike hypersurfaces}
Let $o\in \LM^{m+1}$. For a fixed unit future-pointing tangent vector $X\in T_o\LM^{m+1}$, define its orthogonal complement as 
\[
X^\perp = \{v \in T_o\LM^{m+1} \, ; \, \eta(v,X) = 0\}.
\]
We regard $X^\perp$ as a spacelike hyperplane, passing through the point $o\in \LM^{m+1}$, and orthogonal to $X$. A topological hypersurface $\Sigma\subset\LM^{m+1}$ is called \emph{entire} if there exist a timelike vector $X$ and a real function $f: X^\perp \to \R$ such that $
\Sigma=\{ x + f(x)X \,;\ x\in X^\perp\}\,.$

Up to a Lorentzian isometry we may always assume $X = E_0$ and identify $\LM^{m+1} \cong \R \times X^\perp \cong \R\times \R^m$, thus every entire spacelike hypersurface is of the form
\begin{align*}
    \Sigma =\{ (f(x),x) \,;\ x\in \R^m\}\,.
\end{align*}
When it is useful to emphasize the dependence on $f$, we write $\Sigma_f$.

Introducing the \emph{graph map}
\begin{align}\label{eq:graph_map_cartesian}
    \Gamma_f: \R^m \to \LM^{m+1}, \qquad \Gamma_f(x) = (f(x),x), \qquad \Sigma_f = \Gamma_f(\R^m)\,,
\end{align}
we define the \emph{graph metric} on $\R^m$ by
\begin{align*}
    g_f = \Gamma_f^*\eta = -df^2 + \delta\,,
\end{align*}
so that $(\R^m,g_f)$ is isometric to $\Sigma_f$ endowed with the induced metric. When no confusion arises, we omit the subscript $f$ and simply write $g$ and $\Gamma$. Since $\Sigma_f$ is spacelike if and only if $g$ is Riemannian, it follows that
\begin{align*}
    \Sigma_f\ \text{is spacelike} \ \iff \ |\grad f| < 1\,.
\end{align*}

Let $\{E_0, E_1, \ldots E_m\}$ denote the standard basis on $\R^{m+1}\cong\LM^{m+1}$ with $\langle E_0,E_0\rangle = -1$. If we define the \emph{tilt function} $\phi$ of $\Sigma_f$ with respect to $E_0$ as
\begin{align*}
    \phi := - \langle E_0,N\rangle =\frac{1}{\sqrt{1 - |\grad f|^2}}\,,
\end{align*}
then the future-pointing unit normal of $\Sigma_f$ is given by
\begin{align}\label{eq:normal_cartesian}
    N = \phi(E_0 + \grad f)\,. 
\end{align}

The tilt function is closely related to the density of the volume element of the graph metric. Indeed, since in Cartesian coordinates, the components of the graph metric and its inverse are
\begin{align*}
    g_{ij} = \delta_{ij} - f_i f_j\,, \qquad g^{ij} = \delta^{ij} + \phi^2 f^i f^j,
\end{align*}
by the matrix determinant lemma we have $\det g_{ij} = 1 - |\grad f|^2 = \phi^{-2}$. Thus, denoting by $dv_g$ and $dx$ the volume elements of $g$ and $\delta$, respectively, we get
\begin{align}\label{eq:volumes_cartesian}
dv_g = \phi^{-1} \, dx\,.
\end{align}

We now turn to the extrinsic geometry of $\Sigma_f$. The \emph{second fundamental form} of $\Gamma$ is the bilinear form on $\R^m$
\[
\second(X,Y)
=-\langle \bnabla_{\Gamma_*X}\Gamma_*Y,N\rangle
=\langle \Gamma_*X,\bnabla_{\Gamma_*Y}N\rangle,
\]
for $X,Y\in\mathfrak{X}(\R^m)$, where $\bnabla$ is the Levi-Civita connection of $\eta$. Since the ambient space $\LM^{m+1}$ is flat, $\bnabla$ coincides with the standard derivative in Cartesian coordinates. 
A direct computation using \eqref{eq:normal_cartesian} then gives
\begin{align}\label{eq:second_hessian}
    \second = \phi \, \hess f.
\end{align}

The \emph{shape operator} $A$ is the $(1,1)$ tensor associated to $\second$ via the metric $g$, namely $g (AX,Y) = \second(X,Y)$ for any $X,Y\in\mathfrak{X}(\R^m)$. Equivalently,
\begin{align*}
    AX= \bnabla_{\Gamma_*X}N.
\end{align*}
The mean curvature $H$ and the Gauss--Kronecker curvature $K$ of $\Sigma$ are, respectively, the normalized trace and the determinant of $A$, or equivalently the normalized $g$-trace and the $g$-determinant of $\second$. In coordinates,
\begin{align}\label{eq:mean_and_gauss}
    mH = \tr_g\second=g^{ij}\second_{ij}, \qquad K ={\det}_g\second= \det(g^{ik}\second_{kj}).
\end{align}

\subsection{Spacelike radial graphs}\label{ss:radial}

A topological hypersurface $\Sigma\subseteq \LM^{m+1}$ is said to be a \emph{radial graph} with respect to a point $o \in \LM^{m+1}$ if there exists a function $u: \h^m \to \R$ such that
\begin{align}\label{eq:entire_surface}
    \Sigma = \{ o + qe^{u(q)}\; ; \; q\in \h^m\}\,.
\end{align}
Also in this case, when it is useful to emphasize the dependence on $u$, we write $\Sigma_u$. 

Observe that every radial graph is contained in the chronological future $I^+(o)$. Consider the radial projection onto the hyperboloid $\h^m$
\begin{align*}
    \pi: I^+(o) \longrightarrow \h^m, \qquad \pi(p) = \frac{p-o}{\ell_o(p)}\,,
\end{align*}
where $\ell_o$ denotes the Lorentzian distance from $o$ (see \eqref{eq:time_separation}). When the origin is fixed, we will omit the subscript $o$. If we let $\bar h = \pi^* h$ be the pullback of the hyperbolic metric on $\h^m$, then the ambient metric $\eta$ can be written in polar coordinates as
\begin{align*}
    \eta = - d\ell^2 + \ell^2 \bar h.
\end{align*}
It is convenient to introduce the \emph{time function} $\tau = \ln \ell$, so that 
\begin{equation}\label{eq:metric_eta}
    \eta = e^{2\tau}(-d\tau^2 + \bar h)\,.
\end{equation}

Given a function $u:\h^m\to \R$, consider its radial graph in $I^+(o)$
\begin{equation}\label{eq:Fradial}
    F_u: \h^m\to \LM^{m+1}, \qquad F_u(q) =o + qe^{u(q)}, \qquad \Sigma_u = F_u(\h^m).
\end{equation}
The function $u$ is called the \emph{height function} of the radial graph. Recalling that $\ell(q)=1$ for every $q\in\h^m$, it follows that
\[
\ell(F_u(q)) = e^{u(q)}.
\]
Hence the height function can be interpreted as the restriction of $\tau$ to $\Sigma_u$:
\begin{align}\label{eq:height_function}
    u = F_u^*\tau.
\end{align}
From now on, we shall identify $\tau_{|\Sigma_u}$ with $u$.

The \emph{graph metric} is defined by
\begin{align}\label{eq:metric_g}
    g_u := F^*_u\eta = e^{2u}\left(-du^2 + h \right),
\end{align}
so that $(\h^m, g_u)$ is isometric to $\Sigma_u$ with its induced metric. 
For convenience, we will often identify a radial graph $\Sigma_u$ with either its immersion $F_u$ or its height function $u$, omitting the subscript when no confusion arises.

Recall that $D$ is the Levi-Civita connection of $h$ and $\abs{D\varphi}^2 = h^{ij}\varphi_i\varphi_j$. Since $\Sigma_u$ is spacelike if and only if $g_u$ is Riemannian, it follows from \eqref{eq:metric_g} that
\begin{align*}
    \Sigma_u \ \text{is spacelike} \ \iff \ \abs{Du} < 1.
\end{align*}

Let us set
\begin{align}\label{eq:def_T}
    T := -\bnabla \ell = -e^\tau\bnabla\tau
\end{align}
which is a unit, timelike, future-pointing vector field on $I^+(o)$. If $\Sigma_u$ is spacelike, then it admits a well-defined future-pointing timelike unit normal field $N$. As in the introduction, we define the tilt function of $\Sigma_u$ with respect to $T$ by 
\[
w := - \langle T,N\rangle\,.
\]

Every vector field $X$ in $\LM^{m+1}$ can be decomposed into its tangential and normal components along $\Sigma$ as
\begin{align*}
    X = F_* X^\top - \langle X,N\rangle N\,.
\end{align*}
where, in our convention, $X^\top$ is tangent to $\h^m$.

From \eqref{eq:height_function}, \eqref{eq:def_T}, and the definition of $w$, the splitting of $T$ is
\[
T = -e^uF_*\nabla u + wN, \qquad T^\top = -e^u\nabla u.
\]
As a consequence,
\begin{equation}\label{eq:normal_radial}
    F_*\nabla u =  e^{-u}(wN- T)\,,
\end{equation}
and therefore 
\begin{align}\label{eq:useful_w}
    \abs{\nabla u}^2 = e^{-2u}(w^2 - 1), \qquad \text{ and }\qquad w^2 = {1 + e^{2u}|\nabla u|^2}\,. 
\end{align}
In local coordinates on $\h^m$, the graph metric and its inverse are
\begin{align}\label{eq:metric_components}
    g_{ij} = e^{2u}(h_{ij} - u_iu_j), \qquad  g^{ij} = e^{-2u}\Big(h^{ij} + \frac{u^i u^j}{1 - |D u|^2}\Big)\,\,,
\end{align}
so that
\[
  \abs{\nabla u}^2 = g^ {ij}u_iu_j= e^{-2u}\Big(\frac{|Du|^2}{1 - |Du|^2}\Big), \qquad \text{ and }\qquad w = \frac{1}{\sqrt{1 - |Du|^2}}.
\]
In particular, $w \ge 1$, and it diverges as $|Du| \to 1$, that is, as the hypersurface approaches the lightlike regime and ceases to be spacelike.

As for the second fundamental form, since $\Sigma_u$ is a radial graph over $\h^m$, we may regard it as a tensor on $\h^m$. Namely, for every $X,Y\in \X(\h^m)$ we set
\begin{align*}
    \second(X,Y) = -\langle \bnabla_{F_*X}F_*Y,N\rangle = \langle F_*Y, \bnabla_{F_*X}N \rangle,
\end{align*}
so that the Gauss formula reads
\[
\bnabla_{F_*X}F_*Y = F_*(\nabla_XY) + \second(X,Y)N\,.
\]
As a consequence, recalling\eqref{eq:def_T} and \eqref{eq:height_function}, we obtain
 \begin{align*}
     (F^*\bnabla^2\tau)(X,Y) &= (\bnabla^2\tau)_F(F_*X,F_*Y) = X(du(Y)) - (d\tau)_F(\bnabla_{F_*X}F_*Y) \\
     &=X(du(Y)) - du(\nabla_XY) - \second(X,Y) \langle\bnabla\tau,N\rangle \\
     &=\nabla^2u(X,Y) - we^{-u}\second(X,Y)\,,
 \end{align*}
 that is
\begin{align*}
    we^{-u}\second = \nabla^2 u - F^*\bnabla^2\tau\,.
\end{align*}
Thus, to compute the second fundamental form, it suffices to evaluate the pull-back $F^*\bnabla^2\bar\tau$. For this purpose (and also for later use), it is convenient to work in the coordinates described in the following Lemma. These formulas follow from a straightforward computation, which we omit for brevity. Here and in the following, indices $i,j,k$ range over $1,\dots,m$, while $\mu,\nu$ range over $0,\dots,m$.

\begin{Lemma}\label{lem_local_charts}
    Let $\pi:I^+(o)\to\h^m$ be the radial projection, and fix a global chart $\{x^i\}$ on $\h^m$. Consider the global chart $\{\bar x^\mu\}$, $\mu = 0,1,\dots, m$, on $I^+(o)$ given by
\[
\bar x^0 = \tau\,, \qquad \bar x^i = \pi^* x^i\,, \ i= 1,\dots, m\,.
\]
Then the Minkowski metric $\eta$ in this chart is 
\begin{align*}
    \eta_{\mu\nu} = e^{2\tau} (\bar h_{\mu\nu} - \tau_\mu\tau_\nu)
\end{align*}
and its Christoffel's symbols are
\[
    \begin{aligned}
        \overline{\Gamma}^0_{\mu\nu} &= \bar h_{\mu\nu} + \tau_\mu\tau_\nu\,, \\
        \overline{\Gamma}^k_{\mu\nu} &= \tau_\mu\delta^k_\nu + \tau_\nu\delta^k_\mu + \pi^*(\Gamma^k_{ij})\delta^i_\mu\delta^j_\nu\,,
    \end{aligned}
\]
    where $\pi^*(\Gamma^k_{ij})$ denotes the pullback of the Christoffel symbols $\Gamma^k_{ij}$ associated with the hyperbolic metric $h$ on $\h^m$ in the chart $\{x^i\}$. Moreover, denoting by $\bpart_\mu$ the frame induced by $\{\bar x^\mu\}$, we have
    \[
        \bar h_{\mu\nu} = \pi^*(h_{ij})\delta^i_\mu \delta^j_\nu \qquad \bpart_\lambda \bar h_{\mu\nu} = \pi^*(\partial_k h_{ij})\delta^k_\lambda \delta^i_\mu \delta^j_\nu.
    \]
\end{Lemma}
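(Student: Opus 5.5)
The plan is a direct coordinate computation in the chart $\{\bar x^\mu\}$, organised in three steps: the metric, then the metric derivatives, then the Christoffel symbols via the Koszul formula.

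\emph{Step 1: the metric.} Since $\bar x^i=\pi^*x^i$ and $\tau$ is the radial coordinate, we have $\bar h_{\mu\nu}=\bar h(\bpart_\mu,\bpart_\nu)$. The vector $\bpart_0$ is the radial field, tangent to the rays through $o$, and $\pi_*\bpart_0=0$. Hence $\bar h_{0\mu}=0$, and for spatial indices $\bar h_{ij}=\pi^*(h_{ij})$ because $\pi_*\bpart_i=\partial_i$. This gives $\bar h_{\mu\nu}=\pi^*(h_{ij})\delta^i_\mu\delta^j_\nu$. Moreover $\tau_\mu=\bpart_\mu\tau=\delta^0_\mu$, so $\tau_\mu\tau_\nu=\delta^0_\mu\delta^0_\nu$. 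Comparing with \eqref{eq:metric_eta}, $\eta=e^{2\tau}(-d\tau^2+\bar h)$, we read off $\eta_{\mu\nu}=e^{2\tau}(\bar h_{\mu\nu}-\tau_\mu\tau_\nu)$.

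\emph{Step 2: derivatives of the metric.} The components $\pi^*(h_{ij})$ do not depend on $\bar x^0$. Therefore $\bpart_0\bar h_{\mu\nu}=0$ and $\bpart_k\bar h_{ij}=\pi^*(\partial_kh_{ij})$, which is the second identity of the lemma.

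\emph{Step 3: Christoffel symbols.} We write
\[
\overline\Gamma^\lambda_{\mu\nu}=\tfrac12\eta^{\lambda\sigma}\bigl(\bpart_\mu\eta_{\sigma\nu}+\bpart_\nu\eta_{\sigma\mu}-\bpart_\sigma\eta_{\mu\nu}\bigr),
\]
with inverse $\eta^{00}=-e^{-2\tau}$, $\eta^{ij}=e^{-2\tau}\pi^*h^{ij}$ and $\eta^{0i}=0$. Differentiating $e^{2\tau}$ produces the terms $2\tau_\mu\eta_{\sigma\nu}$, while the remaining derivatives act only on $\bar h$. We then split into cases.
\begin{itemize}
\item For $\lambda=0$, only the $\sigma=0$ term survives, and we get $-\tfrac12 e^{-2\tau}(-\bpart_0\eta_{\mu\nu})$ plus contributions from $\bpart_\mu\eta_{0\nu}$. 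Since $\eta_{0\nu}=-e^{2\tau}\delta^0_\nu$, this yields $\overline\Gamma^0_{\mu\nu}=\bar h_{\mu\nu}+\delta^0_\mu\delta^0_\nu$, where the $00$ entry equals $1$.
\item For $\lambda=k$, the derivative of $e^{2\tau}$ produces $\tau_\mu\delta^k_\nu+\tau_\nu\delta^k_\mu$. The term $-\bpart_\sigma\eta_{\mu\nu}$ with $\sigma$ spatial contains no $\tau$-derivative. The purely spatial part reproduces the Christoffel symbols of $h$, pulled back by $\pi$.
\end{itemize}
Throughout, we use $\tau_\mu=\delta^0_\mu$ to rewrite every expression invariantly in the form stated in the lemma.

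No step is a real obstacle. The only care needed is the sign bookkeeping in $\overline\Gamma^0_{00}$, where $\eta_{00}=-e^{2\tau}$ and the inverse metric component is negative. As a sanity check, $\bnabla^2\tau$ must come out consistent with the hyperboloids $\h^m(l)$ having mean curvature $l^{-1}$.
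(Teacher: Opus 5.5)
Your proposal is correct and is essentially the paper's approach: the paper omits this ``straightforward computation'' for brevity, and your steps are exactly that computation. You read off $\eta_{\mu\nu}$ from $\eta=e^{2\tau}(-d\tau^2+\bar h)$ using $\pi_*\bpart_0=0$, $\pi_*\bpart_i=\partial_i$ and $\tau_\mu=\delta^0_\mu$; you use the $\bar x^0$-independence of $\bar h_{\mu\nu}$; and you split the coordinate Christoffel formula into the cases $\lambda=0$ and $\lambda=k$. The resulting entries, including $\overline\Gamma^0_{00}=1$, $\overline\Gamma^k_{0j}=\delta^k_j$ and $\overline\Gamma^k_{00}=0$, check out.
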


In the coordinates described above we have
\begin{align*}
    \bnabla^2_{\!\mu\nu\,}\tau = \tau_{\mu\nu} - \bar\Gamma^\lambda_{\mu\nu}\tau_\lambda = - \bar\Gamma^0_{\mu\nu},
\end{align*}
that is 
\begin{align}\label{eq:hessian_tau}
   \bnabla^2\tau= - d\tau^2 - \bar h\,.
\end{align}
As a consequence, by \eqref{eq:metric_g} we have
\[
   F^* \bnabla^2\tau = F^*(- d\tau^2 - \bar h)= -du^2 - h = - 2 du^2 - e^{-2u}g\,,
\]
and the second fundamental form turns out to be
\begin{align*}
    we^{-u}\second = \nabla^2 u + 2du^2 + e^{- 2u}g\,.
\end{align*}
Taking $g$-traces in  previous formula (see \eqref{eq:mean_and_gauss}) immediately gives the intrinsic mean curvature equation
\begin{align}
    \label{eq:intrinsic_mean_curvature}
    mwe^{-u} H = \Delta_g u +2\abs{\nabla u}^2 + me^{-2u}.
\end{align}

A standard computation (use \eqref{eq:metric_components}) shows that the relation between the Hessians of $u$ in the metrics $g$ and $h$ is
\begin{align*}
    \nabla^2 u = w^2D^2 u + w^2\abs{Du}^2 (h - du^2) - 2du^2\,,
\end{align*}
hence
\begin{align*}
    e^{u}\second = we^{2u}(D^2u + h - du^2)\,.
\end{align*}
Taking $g$-traces we finally obtain the mean curvature equation in hyperbolic coordinates
\begin{gather}\label{eq:hyperbolic_mean_curvature}
    m(e^u H - w) = \div_h(wDu) = w\Delta_h u + w^3 D^2u(Du,Du).    
\end{gather}

We conclude this section deriving useful expressions of $N$ in terms of $\nabla u$ and $Du$. Using coordinates as in Lemma \ref{lem_local_charts} the differential of $F$ is given by
\begin{align*}
    F_*\partial_i = \bpart_i + u_i \bpart_0 \, .
\end{align*}
In particular, since $T = e^{-\tau}\bpart_0$, it holds
\[
    F_*\nabla u = \nabla u + e^u|\nabla u|^2 T\,.  
\]
Using also that 
\begin{align}\label{eq:relation_gradients}
    \nabla u = e^{-2u}w^{2}Du\,, 
\end{align}
from \eqref{eq:normal_radial} and \eqref{eq:useful_w} we obtain
\begin{align}\label{eq:normal_radial2}
    N = wT + w^{-1}e^{u} \nabla u= w\left(T + e^{-u}Du\right).
\end{align}

\subsection{ALC hypersurfaces}
Given a subset $A\subseteq\LM^{m+1}$, the \emph{future domain of dependence} of $A$ is the set $\mathcal{D}^+(A)$ consisting of all points $q\in\LM^{m+1}$ such that every inextendible in the past causal curve starting at $q$ intersects $A$. The \emph{past domain of dependence} $\mathcal{D}^-(A)$ is defined analogously. The set
\begin{align*}
    \mathcal{D}(A) = \mathcal{D}^+(A)\cup \mathcal{D}^-(A)
\end{align*}
is called the \emph{domain of dependence of $A$}.

\begin{Definition}
    A topological hypersurface $\Sigma\subset\LM^{m+1}$ is \emph{asymptotic to a future light cone} (\emph{ALC}, for short) if its domain of dependence is the chronologial future of a point, that is, if $\mathcal{D}(\Sigma) = I^+(o)$ for some $o\in\LM^{m+1}$. 
\end{Definition}

Next lemma provides a characterization of ALC hypersurfaces both as entire hypersurfaces and as radial graphs.

\begin{Lemma}\label{L:ALCf}
Let $\Sigma\subseteq \LM^{m+1}$ be a spacelike ALC hypersurface with respect to $o \in \LM^{m+1}$. Then the following properties hold:
\begin{enumerate}[label=$(\roman*)$]
\item $\Sigma$ is an entire spacelike hypersurface. Moreover, if $X\in T_o\LM^{m+1}$ and $f: X^\perp\to \R$ are such that $\Sigma$ is of the form \eqref{eq:entire_surface}, then
\[
\lim_{|x-o|\to \infty} (f(x) - |x-o|) =0
\]
where $x\in X^\perp$ and $|x - o|$ is the intrinsic distance in $X^\perp$.
\item $\Sigma$ is a spacelike radial graph with respect to $o$.
\end{enumerate}
Moreover, if $\Sigma$ is a spacelike radial graph contained in the closed connected region of $\LM^{m+1}$ bounded by two disjoint ALC hypersurfaces with respect to a common point $o \in \LM^{m+1}$, then $\Sigma$ is itself an ALC hypersurface with respect to $o$.
\end{Lemma}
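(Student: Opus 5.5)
Without loss of generality take $o=0$. The plan is to use the causal structure, namely that $\mathcal{D}(\Sigma)=I^+(0)$ and that $\Sigma$ is spacelike, and to follow the integral curves of two families of timelike curves. These are the vertical lines $t\mapsto (t,x)$ (for the entire-graph property) and the radial rays $s\mapsto sq$, $q\in\h^m$ (for the radial-graph property).

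\emph{Step 1: each timelike line meets $\Sigma$ exactly once.} The inequality \eqref{eq:cone} shows that a spacelike hypersurface meets every timelike line at most once. For existence, fix $x\in\R^m$ and choose $t$ large, so that $(t,x)\in I^+(0)=\mathcal{D}(\Sigma)$. If $(t,x)\in\mathcal{D}^+(\Sigma)$, the past-inextendible causal line $s\mapsto(t-s,x)$ must hit $\Sigma$. If instead $(t,x)\in\mathcal{D}^-(\Sigma)$, the future-directed line hits $\Sigma$. This defines $f(x)$. The Implicit Function Theorem applied to the embedded spacelike $C^1$ hypersurface makes $f$ a $C^1$ function with $|\grad f|<1$, so $\Sigma$ is entire and spacelike.

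The same argument with the ray $s\mapsto sq$ gives (ii), with one extra point. The ray lies in $I^+(0)$, and it can be extended in the past direction through $0$ by a past-directed null segment that leaves $\overline{I^+(0)}$. Points of the ray with $s$ small are in $\mathcal{D}(\Sigma)$, so some inextendible causal curve through the ray meets $\Sigma$. Since $\Sigma\subset \mathcal{D}(\Sigma)=I^+(0)$ (a point of $\Sigma$ belongs to its own domain of dependence), that intersection point lies in $I^+(0)$, and hence on the ray itself rather than on the null extension. Transversality of the ray to $\Sigma$ then gives $u\in C^1(\h^m)$.

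\emph{Step 2: asymptotics.} Step 1 gives $f(x)>|x|$, since $\Sigma\subset I^+(0)$. For the reverse bound, suppose there are $\varepsilon>0$ and $|x_k|\to\infty$ with $f(x_k)\ge|x_k|+\varepsilon$. By \eqref{eq:cone}, $\Sigma$ lies above the cone $t=f(x_k)-|x-x_k|$. Consider a point $p=(\varepsilon/2+|y|,y)$ just inside $I^+(0)$ in the direction of $x_k$, with $|y|$ comparable to $|x_k|$. The past-directed null line from $p$ heading towards $x_k$ stays strictly below $\Sigma$ near $x_k$, and going backwards it never reaches $\Sigma$, because it remains below the cone at $(f(x_k),x_k)$. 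The future-directed null line from $p$ in the opposite direction also misses $\Sigma$. Hence $p\notin\mathcal{D}(\Sigma)$, a contradiction. The main obstacle is to choose $p$ precisely so that both its past and future null rays miss $\Sigma$; I would use the Lipschitz bound $|f(x)-f(x_k)|\le|x-x_k|$ together with $f>|x|$ to trap the null rays.

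\emph{Step 3: the final claim (comparison).} Let $\Sigma_1,\Sigma_2$ be the two ALC hypersurfaces bounding the region, and let $\Sigma$ be the spacelike radial graph between them. Any point $p\in I^+(0)$ lies in $\mathcal{D}(\Sigma_1)\cap\mathcal{D}(\Sigma_2)$, so every inextendible causal curve through $p$ meets both $\Sigma_1$ and $\Sigma_2$. Suppose $p$ lies, say, below $\Sigma$ (both $\Sigma_i$ being graphs by Step 1). Then an inextendible causal curve through $p$ meets the graph lying above $\Sigma$, so by continuity and connectedness it crosses $\Sigma$ between $p$ and that intersection. Hence $I^+(0)\subseteq\mathcal{D}(\Sigma)$. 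Conversely, $\mathcal{D}(\Sigma)\subseteq I^+(0)$: a point outside $I^+(0)$ has causal curves that escape along the light cone $\partial I^+(0)$ and avoid $\Sigma\subset I^+(0)$, since the bounding graphs approach the cone by Step 2.
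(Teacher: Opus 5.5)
Your Steps 1 and 3 follow the paper's argument. In places they are more careful: you separate the $\mathcal{D}^+$ and $\mathcal{D}^-$ cases, extend the radial ray past the vertex, and check $\mathcal{D}(\Sigma)\subseteq I^+(0)$. That inclusion in fact holds for every subset of $I^+(0)$ and needs no appeal to Step~2.

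The genuine gap is Step~2. To show $p\notin\mathcal{D}^-(\Sigma)$ you must exhibit a future-inextendible causal curve from $p$ that misses $\Sigma$ \emph{all the way to infinity}. Your construction uses a single point $x_k$. The data $f(x_k)\ge|x_k|+\varepsilon$, $f>|x|$ and $|f(x)-f(x_k)|\le|x-x_k|$ only control $\Sigma$ on a bounded region, so no choice of $p$ can rescue this. For any fixed $x_k$, the hyperboloid $f(x)=\sqrt{a^2+|x|^2}$ with $a$ large is ALC and satisfies exactly these data, so they cannot produce a contradiction.

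The specific rays you propose also fail. Outward beyond $x_k$, the cone bound $f(x)\ge f(x_k)-|x-x_k|$ gives nothing. The future null ray from $p=(\varepsilon/2+|y|,y)$ pointing back towards the origin moves deeper into $I^+(0)$ and reaches $(\varepsilon/2+2|y|,0)$. That point lies above $(f(0),0)\in\Sigma$ once $|y|>f(0)/2$. Since $p$ lies below $\Sigma$ (as it does for $|y|\le|x_k|$), this ray crosses $\Sigma$.

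The missing idea is to combine infinitely many $x_k$ through the monotonicity of $r\mapsto f(r\omega)-r$. This function is decreasing because $|\grad f|<1$, so it propagates the bound at $x_k$ inward along the ray through $x_k$.

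The paper first proves $f(r\omega)-r\to0$ for each fixed $\omega$. Otherwise $f(r\omega)-r\ge C>0$ for all $r$, and $r\mapsto(C/2+r,r\omega)$ is a future-inextendible causal curve below $\Sigma$ that never meets it. It then obtains uniformity in $\omega$ from compactness of $\mathbb{S}^{m-1}$ and the Lipschitz bound.

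Within your setup you could instead pass to a subsequence with $\omega_k:=x_k/|x_k|\to\omega$. Monotonicity gives $f(r\omega_k)-r\ge\varepsilon$ for $r\le|x_k|$, hence $f(r\omega)-r\ge\varepsilon$ for every $r\ge0$ by continuity. Then $p=(\varepsilon/2,0)\in I^+(0)$ lies below $\Sigma$, so the downward vertical line shows $p\notin\mathcal{D}^+(\Sigma)$. The null ray $r\mapsto(\varepsilon/2+r,r\omega)$ stays strictly below $\Sigma$, so $p\notin\mathcal{D}^-(\Sigma)$. This contradicts $\mathcal{D}(\Sigma)=I^+(0)$.
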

\begin{proof}
We can identify $\LM^{m+1}\cong\R\times \R^m$ in such a way that $X$ and $o$ are mapped to $E_0=(1,0,\dots,0)$ and the zero vector respectively. In particular 
\[
I^+(o) \cong \{(t,x)\in\R^{m+1} \, ; \, t > |x|\}.
\]

Let $\Sigma$ be a spacelike ALC hypersurface. Fix $x \in \R^m$ and consider the vertical line $(t, x)$ for $ t >0$. For $t>0$ big enough this is a causal curve in $I^+(o)$, and by definition of an ALC hypersurface it intersects $\Sigma$ at some point $(f(x),x)$. By \eqref{eq:cone}, such an intersection point is unique. Hence
\[
\Sigma = \Sigma_f = \{(f(x), x) \,;\ x \in \R^m\}\,,
\]
therefore $\Sigma$ is entire. Since $\Sigma$ is spacelike, it follows that $f \in C^1(\R^m)$ and $|\grad f|<1$.

Since $\Sigma \subset I^+(0)$ we have\ $f(x) > |x|$. Fix $\omega \in \mathbb{S}^{m-1}$ and consider
\[
R_\omega(r) = f(r\omega) - r\,, \qquad r >0\,.
\]
Notice that, since $|\grad f|<1$, the function $R_\omega$ is decreasing in $r$ for every $\omega$. We claim that $R_\omega(r) \to 0$ as $r \to \infty$. Indeed, if there exist $C>0$ and $r_0$ such that $R_\omega(r) \ge C$ for all $r \ge r_0$, then the curve $\gamma(r) = (C/2 + r, r\omega)$, $r \geq r_0$, is a future-inextendible causal curve contained in $I^+(0)$ that does not intersect $\Sigma$, contradicting the ALC property.

Next we show that 
\begin{equation}\label{eq:asy}
\lim_{r \to \infty}\sup_{\omega\in\mathbb{S}^{m-1}} R_\omega(r) = 0\,.
\end{equation}
Arguing by contradiction, assume that there exist $C>0$ and sequences $r_j \to \infty$ and $\omega_j \in \mathbb{S}^{m-1}$ such that
\[
R_{\omega_j}(r_j) = f(r_j \omega_j) - r_j > C \quad \text{ for all }j \in \N\,.
\]
Up to a subsequence, we may assume that $\omega_j \to \omega \in \mathbb{S}^{m-1}$. By the previous argument, there exists $\hat r$ such that $R_{\omega}(r) = f(r \omega) - r < C/4$ for all $r \geq \hat r$.

Let $\omega' \in \mathbb{S}^{m-1}$ satisfy $|\omega' - \omega| < \frac{C}{4\hat r}$. Using \eqref{eq:cone}, we obtain
\[
R_{\omega'}(\hat r) = f(\hat r \omega') - \hat r = f(\hat r \omega') - f(\hat r \omega) + f(\hat r \omega) - \hat r < \hat r |\omega' - \omega| + C/4 < C/2\,.
\]
Since $R_\omega$ is decreasing, the above inequality holds for all $r \ge \hat r$. Taking $\omega' = \omega_j$ for $j$ large enough so that $|\omega_j - \omega| < \frac{C}{4\hat r}$ and $r_j \ge \hat r$, we obtain
\[
R_{\omega_j}(r_j)
< C/2\,,
\]
a contradiction. Thus \eqref{eq:asy} holds, which is equivalent to
\[
f(x)-|x|\to 0
\quad \text{as } |x|\to\infty\,.
\] 
This proves $(i)$.

For (ii) fix $q_0 \in \h^m$ and consider the curve $e^u q_0$, $u \in \R$. This is a causal curve, thus by the ALC property there exists $u(q_0)$ such that $e^{u(q_0)} q_0 \in \Sigma$. Uniqueness follows from \eqref{eq:cone}. Since $q_0$ was arbitrary,
\[
\Sigma = \{ qe^{u(q)}  \;;\;  q\in \h^m \},
\]
that is, $\Sigma$ is an entire radial graph. Since $\Sigma$ is spacelike, we have $u \in C^1(\h^m)$ and $|Du|<1$.

Finally, we prove the last statement. Let $\Sigma_1$ and $\Sigma_2$ be two disjoint ALC hypersurfaces. By (ii), $\Sigma_1 = \Sigma_{v_1}$, $\Sigma_2 = \Sigma_{v_2}$ for some functions $v_1, v_2 : \h^m \to \R$ with $v_1 < v_2$. Let now $\Sigma =\Sigma_u$ be a radial graph which lies between $\Sigma_1$ and $\Sigma_2$, that is, such that
\[
v_1 \leq u\leq v_2\,.
\]
Suppose by contradiction that $\Sigma_u$ is not ALC. Then there exists a point $q_0 \in I^+(o)$ and an inextendible causal curve $\Gamma$ through $q_0$ that does not intersect $\Sigma_u$. We can parametrize $\Gamma$ as
\[
\Gamma(s) = o + q(s)e^{\gamma(s)}\,, \quad s \in \R\,,
\]
for suitable functions $q: \R \to \h^m$ and $\gamma: \R \to \R$. Since $\Sigma_1$ and $\Sigma_2$ are ALC, possibly reversing the orientation of $\Gamma$, there exist $s_1 < s_2$ such that
\[
\gamma(s_1) = v_1(q(s_1))\,, \qquad \gamma(s_2) = v_2(q(s_2))\,.
\]
Consider the continuous function
\[
\varphi(s) = u(q(s)) - \gamma(s)\,.
\]
Then $\varphi(s_1) \geq 0$ and $\varphi(s_2) \leq 0$, so there exists $s_* \in [s_1,s_2]$ such that $\varphi(s_*)=0$. This means that $\Gamma$ intersects $\Sigma_u$, providing a contradiction. Hence $\Sigma_u$ is ALC, and the proof is complete.
\end{proof}

\section{Proofs of Theorem \ref{thm:willmore} and Theorem \ref{thm:L^p}}\label{sec:Willmore}

In this section, we prove the Willmore-type inequality stated in Theorem \ref{thm:willmore} and the non-integrability result given in Theorem \ref{thm:L^p}.

In what follows, for a given entire spacelike hypersurface $\Sigma$, we denote by 
\[
\Sigma^+=\{\second\geq 0\}
\]
the set of points where the second fundamental form of $\Sigma$ is positive definite. By a slight abuse of notation, we also use $\Sigma^+$ to denote the subset of $\R^m$ whose points are mapped into $\Sigma^+$ via $\Gamma_f$ (see \eqref{eq:graph_map_cartesian}).

\begin{proof}[Proof of Theorem \ref{thm:willmore}] 
Let $\Sigma = \Sigma_f$ be an entire spacelike hypersurface associated to $f \in C^2(\R^m)$. Using \eqref{eq:normal_cartesian}, we readily check that
\begin{equation}\label{eq:surjective}
         N:\Sigma^+\subseteq\Sigma\to \h^m \text{ is surjective} \quad \iff \quad \grad f: \Sigma^+\subseteq\R^m \to \B^m \text{ is surjective}, 
\end{equation}
where we recall that $\B^m$ denotes the unit ball of $\R^m$.
By the area formula, we then obtain
    \begin{align*}
        |\B^m|  = \int_{\grad f(\Sigma^+)} \, dp \leq \int_{\Sigma^+} \det\hess \, f \, dx. 
    \end{align*}
    Next, observe that since $\det g_{ik} = \phi^{-2}$ (see \eqref{eq:volumes_cartesian}), by \eqref{eq:second_hessian} and \eqref{eq:mean_and_gauss} we have
    \begin{align*}
        {\det}_g\second = \det g^{ik}\second_{kj} = \frac{\det\second_{kl}}{\det g_{ik}} = \phi^2\det(\phi\hess \, f) = \phi^{m+2} \det\hess \, f\,,
    \end{align*}
which gives
    \[
\det \hess f = \phi^{-m-2}{\det}_g \second  = \phi^{-m-2} K\,. 
    \]
Applying the arithmetic-geometric mean inequality
    \begin{align}\label{eq:arithmetic_geometric}
         0 \leq K^{\frac{1}{m}} = \left({\det}_g\second\right)^{\frac{1}{m}} \leq \frac{\tr_g\second}{m} = H\,,
    \end{align}
    which holds at every point of $\Sigma^+$, we get
    \begin{align*}
        |\B^m| \leq \int_{\Sigma^+}\phi^{-m-2}K \, dx \leq \int_{\Sigma^+} \phi^{-m-2} H^m\, dx \leq \int_\Sigma \phi^{-m-1}|H|^m\, dv_g\,,
    \end{align*}
    where in the last step we also used \eqref{eq:volumes_cartesian}. This proves the Willmore-type inequality. 
    
   Now, assume that equality is achieved. Then, by previous computations we infer that $H \equiv 0$ on $\Sigma \setminus \Sigma^+$, while on $\Sigma^+$ we have that $\second = H g$ (by the equality in \eqref{eq:arithmetic_geometric}) and that $\grad f$ is injective. 

    Let us call $P  = \{p \in \Sigma \;;\; H >0\}$. Clearly $P \subset \Sigma^+$, and $P$ is open and non empty (otherwise $H \equiv 0$ on $\Sigma$, contradicting equality). On $\Sigma^+$ we have $\second = H g$, and in particular this holds on $P$. Since Minkowski spacetime is flat, the Gauss-Codazzi equations give
    \begin{align*}
        0 = (\nabla_X\second)Y - (\nabla_Y\second)X = X(H)Y - Y(H)X
    \end{align*}
for all tangent vector fields $X, Y$. Hence, $dH = 0$, i.e., $H$ is constant on every connected component of $P$. Therefore, by continuity of $H$, the set $P$ is also closed. Thus $P = \Sigma$ and $H > 0$ is constant on all of $\Sigma$.  

Consider now the map $\varphi:\Sigma \to \LM^{m+1}$ given by
    \begin{align*}
        \varphi(q) := q - \frac{N(q)}{H}.
    \end{align*}
    We may regard $\varphi$ as a vector field along $\Sigma$.  Computing its covariant derivative along a tangent vector field $X$, we obtain
\begin{align*}
    \bnabla_X \varphi = X - \frac{1}{H}\bnabla_XN = X - \frac{1}{H} H X = 0\,.
\end{align*}
Hence, $\varphi$ is constant along $\Sigma$, i.e., there exists a point $o \in \LM^{m+1}$ such that
\[
    q = o + \frac{N(q)}{H} \quad \text{for all } q \in \Sigma\,.
\]

It follows that
\begin{align*}
    \langle q-o,q-o\rangle = H^{-2}\langle N(q), N(q) \rangle = - H^{-2} \qquad \text{ for all } q\in \Sigma,
\end{align*}
that is, $\Sigma$ is the hyperboloid $o + \h^m(l)$, where $l =H^{-1}$. The proof is complete.
\end{proof}

We now prove Theorem \ref{thm:L^p}. This will follow from a preliminary sharp local inequality. Denote by $r:\Sigma\to [0,\infty)$ the geodesic distance in $\Sigma$ from a fixed point $p\in \Sigma$, and let $B_\rho = \{ r<\rho\}$ be the geodesic ball of radius $\rho>0$ centred at $p$. We then have the following.

\begin{Lemma}\label{L:first_estimate}
    Let $\Sigma$ be a $C^2$ entire spacelike hypersurface and let $B_\rho^+ = B_\rho \cap \Sigma^+$. Then, for any $\rho>0$, it holds that
    \begin{align}\label{eq:first_estimate}
        \norm{H}{L^m(B_\rho)} \ge  |N(B_\rho^+)|^{\frac{1}{m}}\,.
    \end{align}
\end{Lemma}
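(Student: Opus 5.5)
The plan is to repeat the argument of Theorem~\ref{thm:willmore}, but to compute the volume of the image of the Gauss map intrinsically on $\h^m$, so that no tilt factor $\phi$ appears. The Gauss map $N:\Sigma\to\h^m$ is $C^1$, and its differential is the shape operator: $dN(X)=\bnabla_X N = AX$. Here we view $T_{N(p)}\h^m = N(p)^\perp = T_p\Sigma$ as the same spacelike hyperplane with the same induced metric. Hence the Jacobian of $N$, computed with respect to $dv_\Sigma$ and the hyperbolic volume $dv_h$, is $|\det A| = |K|$.

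The area formula applied on $B_\rho^+$ then gives
\[
|N(B_\rho^+)| \le \int_{N(B_\rho^+)} \#\big(N^{-1}(y)\cap B_\rho^+\big)\, dv_h(y) = \int_{B_\rho^+} K \, dv_\Sigma .
\]
Here $|N(B_\rho^+)|$ is the hyperbolic measure of the image; this is the natural reading of the statement, since $N$ takes values in $\h^m$. We also use that $K\ge 0$ on $\Sigma^+$.

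On $\Sigma^+$, the arithmetic--geometric mean inequality \eqref{eq:arithmetic_geometric} gives $0\le K\le H^m$. Therefore
\[
|N(B_\rho^+)| \le \int_{B_\rho^+} H^m\, dv_\Sigma \le \int_{B_\rho} |H|^m\, dv_\Sigma = \norm{H}{L^m(B_\rho)}^m .
\]
Taking $m$-th roots yields \eqref{eq:first_estimate}.

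The only step that needs care is the identification of the Jacobian of $N$ with $K$, including the volume normalisation on $\h^m$. I would check it in the Cartesian picture as a consistency test. By \eqref{eq:surjective}, $N$ corresponds to $\grad f$ composed with the diffeomorphism $\B^m\to\h^m$, $p\mapsto (1,p)/\sqrt{1-|p|^2}$. This diffeomorphism has Jacobian $(1-|p|^2)^{-(m+1)/2}=\phi^{m+1}$ with respect to $dp$ and $dv_h$. Combining this with $\det\hess f=\phi^{-m-2}K$ and $dv_g=\phi^{-1}dx$ gives exactly $K\,dv_\Sigma$. This confirms the intrinsic computation, and it also explains why the weight $\phi^{-m-1}$ appears in Theorem~\ref{thm:willmore}: that theorem uses Euclidean measure on $\B^m$ rather than hyperbolic measure on $\h^m$.

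Finally, the area formula applies because $N\in C^1$ (as $\Sigma\in C^2$) and $B_\rho^+$ is measurable.
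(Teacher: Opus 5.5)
Your proposal is correct and follows essentially the same route as the paper: you identify $K$ as the Jacobian of the Gauss map $N:\Sigma\to\h^m$, apply the area formula on $B_\rho^+$, and then use the arithmetic--geometric mean inequality $0\le K\le H^m$ there. The only difference is that the paper cites O'Neill for the Jacobian identification, whereas you derive it from $dN=A$ and additionally check it in the Cartesian picture.
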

\begin{proof}

The proof follows the same scheme as that of Theorem \ref{thm:willmore}, with minor adjustments. Let $\Sigma = \Sigma_f$ be an entire spacelike hypersurface associated to $f \in C^2(\R^m)$. It is well known (see, for instance, \cite[Proposition 7.23]{ONeill1983}) that the Gauss-Kronecker curvature $K$ introduced in \eqref{eq:mean_and_gauss} coincides with the Jacobian determinant of the Gauss map $N: \Sigma \to \h^m$. Thus, by the area formula and \eqref{eq:arithmetic_geometric}, we obtain
   \[
  |N(B_\rho^+)| = \int_{N(B_\rho^+)} \, dv_h \leq \int_{B_\rho^+}K \, dv_g \leq  \int_{B_\rho^+}H^m \, dv_g \leq \int_{B_\rho}|H|^m d v_g \,.
   \]
Estimate \eqref{eq:first_estimate} follows immediately.
\end{proof}

We point out that it is not difficult to characterize the equality case in the above lemma, but it is not needed for the purposes of this work.

Next result is a direct consequence of previous lemma, and it is a slightly more general version of Theorem \ref{thm:L^p}.
\begin{Lemma}\label{lem:interp}
 Let $\Sigma$ be a $C^2$ entire spacelike hypersurface. The following statements hold.
\begin{enumerate}
\item[$(i)$] If $p \geq m$, then 
    \begin{align*}
        \norm{H}{L^p(\Sigma)} \geq  \limsup_{\rho\to\infty} \left(\frac{|N(B_\rho^+)|}{|B_\rho|^{1-\frac{m}{p}}}\right)^{\frac{1}{m}}.
    \end{align*}
\item[$(ii)$] if $N:\Sigma^+\to\h^m$ is surjective, then $H \not \in L^m(\Sigma)$. If in addition $H \in L^q(\Sigma)$ for some $q \in (m, \infty]$, then $H \not \in L^p(\Sigma)$ for every $p \in [1, m]$. 
\end{enumerate}
\end{Lemma}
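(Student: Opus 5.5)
Both parts of Lemma~\ref{lem:interp} will be derived from Lemma~\ref{L:first_estimate} together with Hölder's inequality on the geodesic balls $B_\rho$. Throughout, the exhaustion $B_\rho\uparrow\Sigma$ lets us pass to the limit $\rho\to\infty$.

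For $(i)$ we take $p\ge m$ and may assume $H\in L^p(\Sigma)$, since otherwise there is nothing to prove. Hölder's inequality with exponents $p/m$ and $(1-m/p)^{-1}$ gives
\[
\int_{B_\rho}|H|^m\,dv_g\le \norm{H}{L^p(B_\rho)}^m\,|B_\rho|^{1-\frac mp}.
\]
We combine this with \eqref{eq:first_estimate}, which reads $|N(B_\rho^+)|\le \int_{B_\rho}|H|^m dv_g$. Since $\norm{H}{L^p(B_\rho)}\le\norm{H}{L^p(\Sigma)}$, dividing by $|B_\rho|^{1-m/p}$ and taking $m$-th roots yields the bound for every $\rho$. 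Taking $\limsup_{\rho\to\infty}$ then gives $(i)$. In the case $p=m$ the exponent of $|B_\rho|$ vanishes and the statement is just Lemma~\ref{L:first_estimate}.

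For the first claim of $(ii)$, assume $N:\Sigma^+\to\h^m$ is surjective. Since $B_\rho^+\uparrow\Sigma^+$, we have $N(B_\rho^+)\uparrow N(\Sigma^+)=\h^m$. The images $N(B_\rho^+)$ are measurable, being $\sigma$-compact images under a continuous map of sets of the form open $\cap$ closed. By monotone convergence, $|N(B_\rho^+)|\to|\h^m|=\infty$, because the hyperbolic space has infinite volume. Lemma~\ref{L:first_estimate} then forces $\norm{H}{L^m(\Sigma)}=\infty$.

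For the second claim of $(ii)$, suppose in addition that $H\in L^q$ with $q\in(m,\infty]$, and assume by contradiction that $H\in L^p$ for some $p\in[1,m]$. Then $p\le m<q$, so interpolation (log-convexity of $L^r$ norms, $\norm{H}{L^m}\le\norm{H}{L^p}^{\theta}\norm{H}{L^q}^{1-\theta}$ with $\frac1m=\frac\theta p+\frac{1-\theta}q$, including $q=\infty$) gives $H\in L^m(\Sigma)$. This contradicts the first claim.

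The only slightly delicate points are the measurability of $N(B_\rho^+)$ and the infinite volume of $\h^m$. Both are standard, and if needed one can replace $|N(B_\rho^+)|$ by the outer measure, since the area formula inequality still holds.
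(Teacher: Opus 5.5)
Your proposal is correct and follows the paper's proof essentially verbatim. For $(i)$ you combine H\"older's inequality on $B_\rho$ with Lemma~\ref{L:first_estimate}, and for $(ii)$ you use $|N(B_\rho^+)|\to|\h^m|=\infty$ followed by the $L^p$--$L^q$ interpolation inequality; your remark on the measurability of $N(B_\rho^+)$ is a small rigor point that the paper leaves implicit.
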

\begin{proof}
    By H\"older inequality and Lemma \ref{L:first_estimate}, for any $p\geq m$ we have
    \begin{align*}
        |N(B_\rho^+)|^{\frac{1}{m}} \leq \norm{H}{L^m(B_\rho)}\leq |B_\rho|^{\frac{1}{m}\left(1-\frac{m}{p}\right)}\norm{H}{L^p(B_\rho)}  
    \end{align*}
    and $(i)$ readily follows. 
    
For $(ii)$, assume that $N:\Sigma^+\to\h^m$ is surjective. Then $|N(B_\rho^+)| \to |\h^m| = \infty$ as $\rho \to \infty$, and we immediately obtain $H \not \in L^m(\Sigma)$ applying $(i)$ with $p = m$. Now, let $1\leq p< m$ and suppose that $H \in L^q(\Sigma)$ for some $q \in (m, \infty]$. Since $H \not \in L^m(\Sigma)$, by the interpolation inequalities
\[
\begin{aligned}
& \|H\|_{L^m(\Sigma)} \leq \|H\|_{L^p(\Sigma)}^{\frac{p(q-m)}{m(q-p)}} \|H\|_{L^q(\Sigma)}^{\frac{q(m-p)}{m(q-p)}}\,, \quad q \in (m, \infty)\,,\\
& \|H\|_{L^m(\Sigma)} \leq \|H\|_{L^p(\Sigma)}^{\frac{p}{m}}\|H\|_{L^\infty(\Sigma)}^{\frac{m-p}{m}}\,,
\end{aligned}
\]
we immediately conclude that $H \notin L^p(\Sigma)$. 
\end{proof}

Finally, both Theorems \ref{thm:willmore} and \ref{thm:L^p} apply to ALC hypersurfaces.

\begin{Lemma}\label{lem:surjective_gauss}
     Let $\Sigma$ be a $C^2$ ALC hypersurface. Then $N:\Sigma^+ \to\h^m$ is surjective.

     Moreover:
     \begin{enumerate}
     \item[$(i)$] it holds
    \begin{align*}
        \int_{\Sigma} |H|^m\phi^{-m-1} \, dv_g  \geq |\B^m|\,,
    \end{align*}
    with equality if and only if $\Sigma$ is a hyperboloid.
    \item[$(ii)$] if $H \in L^\infty(\Sigma)$, then $H \not \in L^p(\Sigma)$ for all $p \in [1, m]$.
    \end{enumerate}
\end{Lemma}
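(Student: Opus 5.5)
The plan is to reduce surjectivity of the Gauss map to a minimization argument for affine perturbations of the graph function, using the asymptotics from Lemma~\ref{L:ALCf}. Items $(i)$ and $(ii)$ will then follow directly from Theorem~\ref{thm:willmore} and Lemma~\ref{lem:interp}.

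\textbf{Reduction to a graph.} By Lemma~\ref{L:ALCf}$(i)$, after a Lorentz isometry mapping $o$ to the origin and $X$ to $E_0$, we can write $\Sigma=\Sigma_f$ with $|\grad f|<1$. Since $\Sigma$ is a $C^2$ spacelike hypersurface, the implicit function theorem gives $f\in C^2(\R^m)$. The same lemma also gives
\[
f(x)-|x|\to 0 \quad \text{as } |x|\to\infty .
\]
By \eqref{eq:surjective}, it suffices to show that $\grad f:\Sigma^+\to\B^m$ is surjective. Here $\Sigma^+=\{\second\ge 0\}$, and by \eqref{eq:second_hessian} this is the set $\{\hess f\ge 0\}$, because $\phi>0$.

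\textbf{Minimizing an affine perturbation.} Fix $v\in\B^m$ and set $\psi_v(x):=f(x)-v\cdot x$. By the asymptotics above and the bound $v\cdot x\le |v||x|$,
\[
\psi_v(x)\ge (1-|v|)\,|x|+o(1)\to+\infty \quad \text{as } |x|\to\infty ,
\]
since $|v|<1$. Hence the continuous function $\psi_v$ is coercive and attains a global minimum at some point $x_0\in\R^m$. Because $\psi_v\in C^2$, the minimum point satisfies
\[
\grad f(x_0)=v, \qquad \hess f(x_0)=\hess\psi_v(x_0)\ge 0 .
\]
Thus $x_0\in\Sigma^+$ and $\grad f(x_0)=v$. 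As $v\in\B^m$ was arbitrary, $\grad f(\Sigma^+)=\B^m$, and therefore $N(\Sigma^+)=\h^m$.

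\textbf{Consequences.} Once surjectivity holds, item $(i)$ is exactly Theorem~\ref{thm:willmore}, including the characterization of the equality case. Item $(ii)$ is Lemma~\ref{lem:interp}$(ii)$ with $q=\infty$; equivalently, it is Theorem~\ref{thm:L^p}.

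\textbf{Main obstacle.} The delicate point is that a minimum only gives a positive \emph{semi}definite Hessian. This is precisely the paper's definition $\Sigma^+=\{\second\ge 0\}$, and it is also all that the area-formula arguments require, since those only use $K\ge0$ and the inequality \eqref{eq:arithmetic_geometric}. So the proof should be written with this definition, rather than with strict definiteness, which the minimization argument cannot guarantee.
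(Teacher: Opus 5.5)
Your proposal is correct and follows the paper's proof essentially verbatim. You reduce to an entire graph via Lemma~\ref{L:ALCf}, minimize the coercive function $f(x)-v\cdot x$ to get $\grad f(x_0)=v$ with $\hess f(x_0)\ge 0$ (so $x_0\in\Sigma^+$), and then invoke Theorem~\ref{thm:willmore} and Theorem~\ref{thm:L^p}. The only cosmetic difference is that the paper obtains coercivity from $f(x)>|x|$ (that is, $\Sigma\subset I^+(o)$) rather than from $f(x)-|x|\to 0$; your remark that one gets, and only needs, semidefiniteness matches the paper's definition $\Sigma^+=\{\second\ge 0\}$.
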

\begin{proof}
By Lemma \ref{L:ALCf}, $\Sigma$ is an entire hypersurface. Thus, up to isometries, we may assume that $\Sigma = \Gamma(\R^m)$ where $\Gamma = \Gamma_f$ as in \eqref{eq:graph_map_cartesian}, for a spacelike function $f$ satisfying $f(x) - |x| = o(1)$ as $|x|\to\infty$. 

By \eqref{eq:surjective}, it is enough to prove that $\grad f$ is surjective. Let $p\in \B^m$ and consider the map $\varphi_p : \R^m \to \R$ defined by $\varphi_p(x) = f(x) - x\cdot p$, where $\cdot$ denotes the Euclidean scalar product in $\R^m$.
    Since $f$ is spacelike and ALC, we have $f(x) > |x|$ everywhere, and since $|p|<1$ this implies that $\varphi_p(x)\to\infty$ as $|x|\to\infty$. Hence $\varphi_p$ attains a minimum at some point $y\in\R^m$, that is,
    \begin{align*}
        \grad f(y) = p, \qquad \hess \, f(y) \geq 0\,.
    \end{align*}
By \eqref{eq:second_hessian}, the latter condition is equivalent to $y$ lying in $\Sigma^+$. Since this holds for arbitrary $p$, we conclude that $\grad f(\Sigma^+) = \B^m$, which proves the surjectivity.  

The remaining claims follow directly from Theorem \ref{thm:willmore} and Theorem \ref{thm:L^p}.
\end{proof}

\section{Main estimates}\label{sec:mainest}

The mean curvature of a spacelike radial graph $F_u:\h^m\to I^+(o)$ is given by 
\[
    m(e^u H - w) = \div_h(wDu) \,,   
\]
where $w = (1 - |D u|^2)^{-1/2}$, see subsection \ref{ss:radial}.
This is a quasilinear elliptic PDE on $(\h^m,h)$, whose ellipticity depends on $|Du|$ and degenerates as $|Du|\to 1$. In this section we show that, under suitable additional assumptions on $H$, such degeneracy cannot occur.

More precisely, given a function $\bar H$ on $I^+(o)$, we consider solutions to the Dirichlet problem
\begin{align}\label{eq:dirichlet}
    \begin{cases}
        m(e^u F_u^*\bar H - w) = \div_h(wDu) &\quad \text{in $B_r$} \\
        u = 0 &\quad \text{on $\partial B_r$}\,,
    \end{cases}
\end{align}
and prove that they satisfy gradient bounds that are \emph{independent of the radius} of the hyperbolic geodesic ball $B_r\subseteq \h^m$.

\subsection{Height estimates} 
Our first result establishes a uniform $L^\infty$ bound on $B_r$ for the solutions to \eqref{eq:dirichlet}. As a consequence, the corresponding portions of the hypersurfaces are trapped between two hyperboloids of radii $l$ and $L$, from below and above, respectively.

\begin{Proposition}\label{prop:height_estimates}
    Assume that $\bar H\in C(I^+(o))$ satisfies \eqref{hp:1} and \eqref{hp:3} for some $0<l\leq 1 \leq L$. Then every $C^2$ solution $u$ to \eqref{eq:dirichlet} satisfies
    \begin{align*}
        \ln l \leq u \leq \ln L \qquad \text{on $\overline{B_r}$.}
    \end{align*}
\end{Proposition}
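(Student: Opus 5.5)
We argue by the maximum principle applied directly to $u$ on the compact set $\overline{B_r}$. Since $u=0$ on $\partial B_r$ and $\ln l\le 0\le \ln L$, the boundary values already lie in $[\ln l,\ln L]$. It therefore suffices to rule out an interior maximum with $u>\ln L$ and an interior minimum with $u<\ln l$.

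\emph{Upper bound.} Suppose $\max_{\overline{B_r}} u=u(q_0)>\ln L$. Then $q_0$ is interior, and there $Du(q_0)=0$ and $D^2u(q_0)\le 0$. Since $Du(q_0)=0$ we have $w(q_0)=1$. Expanding the equation as in \eqref{eq:hyperbolic_mean_curvature},
\[
m\bigl(e^u \bar H(e^u q)-w\bigr)=w\Delta_h u+w^3D^2u(Du,Du),
\]
and evaluating at $q_0$ gives $m\bigl(e^{u(q_0)}\bar H(e^{u(q_0)}q_0)-1\bigr)=\Delta_h u(q_0)\le 0$. Hence
\[
e^{u(q_0)}\bar H\bigl(e^{u(q_0)}q_0\bigr)\le 1 .
\]
Now we use \eqref{hp:1}. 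Set $\psi_q(s):=e^{s}\bar H(e^{s}q)=\ell\,\bar H$ along the ray through $q$, parametrized by $\tau=s$. Since $T=-\bnabla\ell$ is the future unit radial field, $T$ is a positive multiple of $\partial_\tau$ along the ray. Indeed, from \eqref{eq:metric_eta} the vector $\bpart_0$ is radial future-pointing, so $T=e^{-\tau}\bpart_0$. Hypothesis \eqref{hp:1}, $T(\ell\bar H)\ge0$, therefore says exactly that $s\mapsto\psi_q(s)$ is nondecreasing. (Here I read the orientation of $T$ as outward along the rays, consistent with the formula $T=e^{-\tau}\bpart_0$.) Since $u(q_0)>\ln L$, \eqref{hp:3} yields
\[
\psi_{q_0}(u(q_0))\ge\psi_{q_0}(\ln L)=L\,\bar H(Lq_0)>1,
\]
which contradicts the inequality above.

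\emph{Lower bound.} This is symmetric. At an interior minimum $q_1$ with $u(q_1)<\ln l$ we have $Du(q_1)=0$, $w=1$ and $\Delta_h u(q_1)\ge0$, so $\psi_{q_1}(u(q_1))\ge 1$. Monotonicity and \eqref{hp:3} give instead $\psi_{q_1}(u(q_1))\le\psi_{q_1}(\ln l)=l\,\bar H(lq_1)<1$, a contradiction.

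The argument needs $\bar H$ only to be continuous, since no derivatives of $\bar H$ enter except through the monotonicity. If \eqref{hp:1} is to be read purely as a statement about continuous functions, it should be understood as monotonicity of $s\mapsto e^s\bar H(e^sq)$; that interpretation is the only delicate point. The main step to double-check is the sign bookkeeping: that $T$ points in the direction of increasing $\tau$, and that \eqref{hp:3} has the inequalities oriented so that the monotone function $\psi_q$ crosses $1$ between $\ln l$ and $\ln L$. This ordering is exactly opposite to the Euclidean Theorem~\ref{thm:treibergs_wei}, because the sign of the divergence term in \eqref{eq:hyperbolic_mean_curvature} is reversed relative to the spherical equation. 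Everything else is the elementary first- and second-derivative test at an extremum.
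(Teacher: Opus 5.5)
Your proof is correct and follows essentially the same route as the paper. At an interior extremum you have $Du=0$ and $w=1$, which reduces the equation to $m\bigl(e^u\bar H(e^u q)-1\bigr)=\Delta_h u$, and this is then contradicted by the monotonicity of $t\mapsto e^t\bar H(qe^t)$ (which is also how the paper reads \eqref{hp:1}) together with \eqref{hp:3}. Your check that $T=e^{-\tau}\bpart_0$ points in the direction of increasing $\tau$ is a detail the paper leaves implicit, and it is right.
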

\begin{proof}
    By assumption, for each $q\in \h^m$, the map $t\mapsto e^t\bar H(qe^t)$ is non-decreasing in $t$, and $L\bar H(qL)>1$. It follows that 
    \begin{align*}
        e^t \bar H(qe^t)-1 >0 \qquad  \text{ for every } t \geq \ln L\,.
    \end{align*}

Let $p\in\overline{B_r}$ be a point where $u$ attains its maximum. If $p\in \partial B_r$ then $\sup u = 0 = \ln 1 \leq \ln L$, as desired. Suppose instead that $p\in B_r$, and assume by contradiction that $u(p) > \ln L$. Since $u$ is $C^2$, we have
    \begin{align*}
        Du(p) = 0 \qquad D^2u(p)\leq 0\,.
    \end{align*}
    Hence,
    \begin{align*}
       m(e^{u(p)} \bar H(pe^{u(p)}) - 1) =  m(e^{u(p)} F^*_u \bar H(p) - w(p)) = \Delta_h u(p) \leq 0\,,
    \end{align*}
    in contradiction with previous inequality. The lower bound can be proved in a similar way. This completes the proof.
\end{proof}

\subsection{Gradient estimates} Here we establish uniform gradient bounds for solutions to \eqref{eq:dirichlet}. The proof is inspired by the work of Bartnik \cite{Bartnik88}. As a preliminary step, we compute the intrinsic Laplacian of the tilt function $w$ of a spacelike radial graph.
\begin{Proposition} 
    Assume $u\in C^3(\h^m)$ is spacelike, and let $\Sigma_u$ be its radial graph. Then the tilt function $w$ satisfies
   \begin{equation}
    \begin{aligned}\label{eq:laplacian_w}
        \Delta_g w - w\abs{\second}^2 &+ mg(T^\top,\nabla H)  \\
        &= w(me^{-2u} + \abs{\nabla u}^2) - mHe^{-u}(w^2 + 1) - 2e^u\second(\nabla u,\nabla u).
    \end{aligned}
    \end{equation}
\end{Proposition}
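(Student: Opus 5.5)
Write $w=-\langle N,T\rangle$ along $\Sigma_u$ and differentiate it twice intrinsically, using that the ambient space is flat. The structure of $T$ is what makes this work. Since $T=-\bnabla\ell$ and $\ell$ is the Lorentzian distance, $T = (p-o)/\ell$. Hence
\[
\bnabla_V T = \ell^{-1}\big(V + \langle V,T\rangle T\big),
\]
so $\bnabla T=\ell^{-1}$ times the projection orthogonal to $T$. On $\Sigma_u$ we have $\ell=e^u$.

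\textbf{First derivative.} For $X$ tangent,
\[
X(w) = -\langle \bnabla_X N, T\rangle - \langle N,\bnabla_X T\rangle .
\]
The first term equals $-\second(X,T^\top)$, where $T^\top=-e^u\nabla u$ is the tangential part. For the second term, $\langle N,X\rangle=0$, so
\[
-\langle N,\bnabla_X T\rangle = -e^{-u}\langle X,T\rangle\langle N,T\rangle = e^{-u}w\,\langle X,T\rangle .
\]
Using $\langle X,T\rangle = g(X,T^\top) = -e^u\,du(X)$, this gives
\[
\nabla w = e^u A\nabla u - w\nabla u .
\]
As a sanity check, this should agree with differentiating $w^2=1+e^{2u}|\nabla u|^2$ together with $we^{-u}\second=\nabla^2u+2du^2+e^{-2u}g$.

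\textbf{Second derivative.} Take the divergence of $\nabla w = e^u A\nabla u - w\nabla u$. The product rule gives
\[
\Delta_g w = e^u\second(\nabla u,\nabla u) + e^u\,\div(A)(\nabla u) + e^u\langle A,\nabla^2u\rangle - g(\nabla w,\nabla u) - w\Delta_g u .
\]
Each piece is handled as follows.
\begin{itemize}
\item Codazzi, valid since $\LM^{m+1}$ is flat, gives $\div A = m\,dH$. Since $T^\top=-e^u\nabla u$, the term $e^u\,m\,dH(\nabla u)$ equals $-m\,g(T^\top,\nabla H)$, which accounts for the $\nabla H$ term in \eqref{eq:laplacian_w}.
\item Substitute $\nabla^2u = we^{-u}\second - 2du^2 - e^{-2u}g$. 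Then $e^u\langle A,\nabla^2 u\rangle$ produces $w|\second|^2$, $-2e^u\second(\nabla u,\nabla u)$ and $-me^{-u}H$.
\item Substitute $\Delta_g u$ from the intrinsic mean curvature equation \eqref{eq:intrinsic_mean_curvature}: $\Delta_g u = mwe^{-u}H - 2|\nabla u|^2 - me^{-2u}$.
\item Substitute $\nabla w$ from the first step into $-g(\nabla w,\nabla u)$, which gives $-e^u\second(\nabla u,\nabla u)+w|\nabla u|^2$.
\end{itemize}

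\textbf{Collecting terms.} The main obstacle is purely bookkeeping: keeping track of the signs of $\second$ (the convention $\second(X,Y)=\langle F_*Y,\bnabla_X N\rangle$) and of $\langle\cdot,T\rangle$ under the Lorentzian signature, and then checking that the terms combine to give exactly the right-hand side
\[
w(me^{-2u}+|\nabla u|^2) - mHe^{-u}(w^2+1) - 2e^u\second(\nabla u,\nabla u).
\]
The $H$-terms should combine as $-me^{-u}H - mw^2e^{-u}H$, while the $|\nabla u|^2$ terms combine through $+2w|\nabla u|^2 - w|\nabla u|^2$. If a coefficient does not match, the first thing to recheck is the sign of the $\bnabla T$ contribution, against the constant-$u$ case. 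There $w=1$ and $\second = e^{-u}g$, and \eqref{eq:laplacian_w} must reduce to
\[
-m e^{-2u} = me^{-2u} - 2me^{-2u},
\]
which indeed holds.
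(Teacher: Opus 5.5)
Your route differs from the paper's. Instead of inserting $\lie_T\eta=2e^\tau\bar h$ and $\bnabla\lie_T\eta$ into Bartnik's general identity for $\Delta_g w$, you differentiate $w=-\langle N,T\rangle$ twice by hand, using $\bnabla_V T=\ell^{-1}(V+\langle V,T\rangle T)$ and Codazzi. That is a sound and more self-contained strategy, and every individual piece you write down is correct, including $\nabla w=e^uA\nabla u-w\nabla u$.

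The gap is in the final tally. By your own last bullet, $-g(\nabla w,\nabla u)$ contributes $+w\abs{\nabla u}^2$, and $-w\Delta_g u$ contributes $+2w\abs{\nabla u}^2$. So the $\abs{\nabla u}^2$-terms sum to $+3w\abs{\nabla u}^2$; the combination ``$+2w\abs{\nabla u}^2-w\abs{\nabla u}^2$'' contradicts what you computed a line earlier. Carried out correctly, your argument proves
\[
\Delta_g w - w\abs{\second}^2 + mg(T^\top,\nabla H) = w\big(me^{-2u} + 3\abs{\nabla u}^2\big) - mHe^{-u}(w^2+1) - 2e^u\second(\nabla u,\nabla u),
\]
and not \eqref{eq:laplacian_w}.

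This cannot be patched, because it is the displayed identity that is off by $2w\abs{\nabla u}^2$. Take $o=0$ and $\Sigma=\{x^0=c\}\cap I^+(0)$ with $c>0$. This is the radial graph of $u(q)=\ln c-\ln q^0$, which is smooth on all of $\h^m$ with $\abs{Du}<1$. There $\second=0$, $H=0$, $g$ is flat in $x\in\R^m$, $e^u=\ell=\sqrt{c^2-|x|^2}$, $w=c/\ell$ and $\abs{\nabla u}^2=|x|^2\ell^{-4}$. A direct computation gives $\Delta_g w=mc\ell^{-3}+3c|x|^2\ell^{-5}=w(me^{-2u}+3\abs{\nabla u}^2)$, whereas \eqref{eq:laplacian_w} demands $w(me^{-2u}+\abs{\nabla u}^2)$. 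Your constant-$u$ check cannot detect this, since every $\nabla u$-term vanishes there.

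The paper's proof has the matching slip. One has $(\bnabla_Z\lie_T\eta)(X,Y)=-2e^\tau\big(d\tau(Z)\bar h(X,Y)+d\tau(X)\bar h(Z,Y)+d\tau(Y)\bar h(Z,X)\big)$, and in evaluating at $(N,e_i,e_j)$ only the first term is kept. But $d\tau(e_i)\bar h(N,e_j)+d\tau(e_j)\bar h(N,e_i)=2e^{-u}w\,d\tau(e_i)\,d\tau(e_j)$ is nonzero, because $e_i$ is tangent to $\Sigma_u$, not to the level sets of $\tau$. Restoring it gives exactly your $3\abs{\nabla u}^2$. The correction is harmless for Lemma~\ref{prop:gradient_estmates}, where this nonnegative term is simply discarded, but the statement must be corrected rather than proved as written.
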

\begin{proof}
Using \cite[Proposition 2.1]{Bartnik84}, which expresses the variation of the mean curvature of $\Sigma_u$ in terms of $w$ as well as extrinsic geometric data, and recalling that the Lorentz-Minkowski metric is flat, we obtain
    \[
    \begin{aligned}
  \Delta_g w - &w \abs{\second}^2 + mg(T^\top, \nabla H) \\
  &= \frac{1}{2} (\bnabla_N \lie_T\eta)(e_j,e_j)  - (\bnabla_{e_j}\lie_T\eta)(N,e_j) - \frac{m}{2} H\lie_T\eta(N,N) - \lie_T\eta (e_i,e_j)\second(e_i,e_j),
    \end{aligned}
    \]
    where $e_j = F_*\xi_j$ and $\{\xi_j\}$ is any $g$-orthonormal frame on $\h^m$ and the sum over repeated indices is understood.
    
   By definition
    \begin{align*}
        \lie_T\eta(X,Y) = \langle \bnabla_XT,Y\rangle + \langle X,\bnabla_YT\rangle = \bnabla T^\flat(X,Y) + \bnabla T^\flat (Y,X)\,.
    \end{align*}
 Since \eqref{eq:def_T} and \eqref{eq:hessian_tau} imply
    \begin{align}\label{eq:nabla_T}
        \bnabla T^\flat &= - \bnabla(e^\tau  d\tau) = - e^\tau d\tau^2 - e^\tau \bnabla^2\tau = e^\tau \bar h\,,
    \end{align}
using also that $\bar h$ is symmetric we obtain
    \begin{align*}
        \lie_T\eta = 2e^\tau \bar h, \qquad \qquad F_u^*\lie_T\eta = 2e^u h.
    \end{align*}
    Then, recalling \eqref{eq:metric_components} we infer 
    \begin{align*}
        \sum_{i,j=1}^mF_u^*\lie_T\eta(e_i,e_j)\second(e_i,e_j) &=  2e^ug^{ik}g^{jl}h_{ij}\second_{kl} \\
        &=2e^ug^{ik}g^{jl}(e^{-2u}g_{ij}+u_iu_j)\second_{kl} \\ &=2e^u(e^{-2u}g^{kl}\second_{kl} + \second(\nabla u,\nabla u)) \\
        &= 2me^{-u}H + 2e^u\second(\nabla u,\nabla u)\,,
    \end{align*}
    while, by \eqref{eq:normal_radial2}, \eqref{eq:relation_gradients} and the fact that $h(\cdot, T) = 0$, we have
       \[
       \lie_T\eta(N,N) = 2w^2e^{-u} h\left(Du,Du\right) = 2w^2e^{-u}\abs{Du}^2 = 2e^u\abs{\nabla u}^2\,.
       \] 
    Next we compute the covariant derivative of $\lie_T\eta$. To this end, we first compute $\bnabla \bar h$. In the local chart described in Lemma \ref{lem_local_charts}, its components are
    \begin{align*}
        \bnabla_\lambda \bar h_{\mu\nu} &= \bpart_\lambda \bar h_{\mu\nu} - \bar h_{a \nu}\bar \Gamma^a_{\mu\lambda} - \bar h_{\mu b}\bar\Gamma^{b}_{\lambda\nu} \\
        &= \pi^*(\partial_k h_{ij} - h_{aj}\Gamma^a_{ik} - h_{ai}\Gamma^a_{kj})\delta^i_\mu\delta^j_\nu\delta^k_\lambda - 2\bar h_{\mu\nu}\tau_\lambda - \bar h_{\nu\lambda}\tau_\mu - \bar h_{\lambda\mu}\tau_\nu\,.
    \end{align*}
    Since $D$ is the Levi-Civita connection of $h$, it is metric compatible, hence 
\[
0 = D_kh_{ij} = \partial_k h_{ij} - h_{aj}\Gamma^a_{ik} - h_{ai}\Gamma^a_{kj}\,.
\]
Substituting this into the previous expression we get
\[
\bnabla_\lambda \bar h_{\mu\nu} = - 2\bar h_{\mu\nu}\tau_\lambda - \bar h_{\nu\lambda}\tau_\mu - \bar h_{\lambda\mu}\tau_\nu\,, 
\]
or equivalently
\[
\bnabla \bar h = - d \tau \otimes \bar h - d\tau \odot \bar h\,,
\]
where $\odot$ denotes the symmetric tensor product.
    Therefore,
    \begin{align*}
        \bnabla \lie_T\eta = 2\bnabla(e^\tau \bar h) = 2e^\tau (d\tau \otimes \bar h + \bnabla \bar h) = -2e^\tau d\tau \odot \bar h \,.
    \end{align*}
Finally, since by definition of $T$ and $w$ we have
\[
d\tau(N) = e^{-\tau}w\,, \qquad \tau_\mu =\delta_\mu^1 \,,
\]
we readily infer that 
    \[
        \bnabla_N\lie_T\eta(e_i,e_j) = \bnabla_{e_i}\lie_T\eta(N,e_j) = - 2w \bar h_{ij}.
    \]
Thus, 
    \begin{align*}
        \sum_{j=1}^m\frac{1}{2} (\bnabla_N \lie_T\eta)(e_j,e_j)  - (\bnabla_{e_j}\lie_T\eta)(N,e_j) &=  wg^{ij}h_{ij}= w(me^{-2u}+ \abs{\nabla u}^2)\,.
    \end{align*}
    Putting everything together we obtain \eqref{eq:laplacian_w}.
\end{proof}

We can now prove the gradient estimate. 

\begin{Lemma}\label{prop:gradient_estmates}
    Let $0<l\leq 1 \leq L <\infty$ and $\Lambda,R>0$. There exists a constant $C = C(l,L,\Lambda,R,m)$ such that, for any $r>R$ and for any $u\in C^3(\overline {B_r})$ such that
    \begin{enumerate}[label=$(\text{h}_{\arabic*})$, ref=$\text{h}_{\arabic*}$]
        \item \label{hp_1} $\ln l \leq u \leq \ln L$ in $\overline{B_r}$,
        \item\label{hp_2} the mean curvature of $\Sigma_u$ satisfy $\abs H \leq \Lambda$ and $\abs{\nabla H} \leq \Lambda w$ in $B_r$,
        \item\label{hp_3} $u$ is constant on the boundary $\partial B_r$,
    \end{enumerate}
    it holds
    \begin{align*}
        w = \frac{1}{\sqrt{1 - \abs{Du}^2}} \leq C \qquad \text{in $\overline{B_r}$.}
    \end{align*}
\end{Lemma}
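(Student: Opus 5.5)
We will argue in the spirit of Bartnik's gradient estimate, applying the maximum principle to an auxiliary function of the form
\[
Q = \varphi(u)\, w \quad\text{or}\quad Q=\eta\, w,
\]
where the cutoff is built from the height $u$, which is trapped between $\ln l$ and $\ln L$ by \eqref{hp_1}. The key identity is \eqref{eq:laplacian_w} for $\Delta_g w$. It contains the good term $w|\second|^2$, and its remaining terms are controlled by $H$, $\nabla H$, $\second(\nabla u,\nabla u)$ and $|\nabla u|^2 = e^{-2u}(w^2-1)$. We combine it with the intrinsic equation \eqref{eq:intrinsic_mean_curvature}, which gives $\Delta_g u$ in terms of $w$, $H$ and $|\nabla u|^2$. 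By \eqref{hp_2}, the term $m g(T^\top,\nabla H)$ is bounded by $m e^u|\nabla u|\,\Lambda w \lesssim \Lambda w^2$. Also $|\second(\nabla u,\nabla u)| \le |\second|\,|\nabla u|^2$, which can be absorbed into $w|\second|^2$ by Young's inequality at the cost of a term of order $w^3$. The difficulty is to produce a negative term of order $w^3$ (or larger) to compensate.

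\textbf{Boundary.} Since $u$ is constant on $\partial B_r$ by \eqref{hp_3}, at a boundary point $Du$ is normal to $\partial B_r$. The usual barrier argument then bounds $|Du|$ there. Because $r>R$, the hyperbolic spheres $\partial B_r$ have mean curvature bounded between $\coth r$ and $1$. We compare $u$ with explicit sub- and supersolutions: radial graphs of functions of the hyperbolic distance to the centre, or hyperboloids centred at suitable points. These are chosen so that they pass through $\partial B_r$ at level $u|_{\partial B_r}$, lie above or below $\Sigma_u$ near the boundary, and have mean curvature beyond $\pm\Lambda$. This yields $w\le C(l,L,\Lambda,R,m)$ on $\partial B_r$, uniformly in $r$ precisely because the boundary geometry of $B_r$ is uniformly controlled for $r>R$. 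This is why $R$ enters the constant.

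\textbf{Interior.} For the interior we take $Q = e^{K u} w$ with a large constant $K$; since $u$ is bounded, this is comparable to $w$. At an interior maximum of $Q$ we have $\nabla w/w = -K\nabla u$ and $\Delta_g Q\le 0$. Expanding with \eqref{eq:laplacian_w} and \eqref{eq:intrinsic_mean_curvature} produces the term $K w\,\Delta_g u$, which contains $K w(2|\nabla u|^2) \sim 2K e^{-2u} w^3$. It also produces $K^2 w|\nabla u|^2 \sim K^2 w^3$, with the wrong sign unless handled, together with the terms above.

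To get the right signs, use the first-order condition $\nabla w = -K w\nabla u$. Using \eqref{eq:normal_radial}, $\nabla w$ can be expressed through $\second(\nabla u,\cdot)$, namely $\nabla w = -e^{u}\second(\nabla u,\cdot)^\sharp + (\text{lower order})$. Hence at the maximum $\second(\nabla u,\nabla u)\approx K w e^{-u}|\nabla u|^2\sim K w^3$, and the term $-2e^u\second(\nabla u,\nabla u)$ contributes a large negative multiple of $w^3$. Choosing the sign of $K$ (or using $e^{-Ku}$) appropriately, and $|K|$ large relative to $\Lambda$, $l$, $L$ and $m$, we aim for an inequality of the form $0\ge c\,w^3 - C w^2$ at the maximum. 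This gives $w\le C$ there, and hence everywhere.

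\textbf{Main obstacle.} I expect the main obstacle to be the interior step, specifically the bookkeeping of the cubic terms in $w$ and the correct sign choice of $K$. If the pure exponential weight does not close, the fallback is Bartnik's trick: use $Q=\eta\,w$ with $\eta$ a function of the time separation from a point shifted from $o$ (a ``Lorentzian cutoff'' that is positive only in a region $\{\ell_{o'} > \text{const}\}$ intersected with the slab). This gives localized bounds independent of $r$, with the needed negative term coming from the Hessian of $\ell_{o'}$ restricted to $\Sigma$, which grows like $w^2$.
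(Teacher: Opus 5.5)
\textbf{The gap is in the interior step.} The mechanism you describe cannot produce $0\ge cw^3-Cw^2$.

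From $w^2=1+e^{2u}\abs{\nabla u}^2$ and $we^{-u}\second=\nabla^2u+2du^2+e^{-2u}g$ one gets the exact identity
\[
\nabla w=e^{u}\second(\nabla u,\cdot)^\sharp-w\nabla u .
\]
Note the sign, and note that $-w\nabla u$ is not lower order. At a critical point of $e^{Ku}w$ this gives $e^{u}\second(\nabla u,\cdot)^\sharp=(1-K)w\nabla u$. Hence $-2e^u\second(\nabla u,\nabla u)=2(K-1)w\abs{\nabla u}^2$ is only \emph{linear} in $K$. Its $K$-part is cancelled exactly by the $-2Kw\abs{\nabla u}^2$ contained in $Kw\Delta_g u$.

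Collecting terms, everything except $w\abs\second^2$ equals $-(K^2+O(1))\,w\abs{\nabla u}^2$, which is roughly $-K^2e^{-2u}w^3$, up to terms of order $(1+\abs K)w^2$. So taking $\abs K$ large hurts, whatever its sign. The only term that can pay for this is $w\abs\second^2$, and you intend to spend it on absorbing $\second(\nabla u,\nabla u)$ by Young's inequality.

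\textbf{The missing idea} is a lower bound for $\abs\second^2$ at the maximum point, with a constant strictly larger than the one in front of $K^2\abs{\nabla u}^2$. The paper gets it in two steps.
\begin{enumerate}
\item The first-order condition gives $\abs{\nabla w}^2\le(w+e^u\abs\kappa)\abs{\nabla w}\abs{\nabla u}$. Hence $(1+\epsilon)\kappa^2w\ge\lambda^2w\abs{\nabla u}^2-Cw^3$, where $\kappa$ is the principal curvature of largest modulus.
\item This is combined with Bartnik's inequality $\abs\second^2\ge(1+\frac1m)\kappa^2-m^2H^2$.
\end{enumerate}
The surplus $\frac1m$ leaves $\frac{\abs\lambda}{4m}\abs{\nabla u}^2\le Cw^2$. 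Hence $w\le 2$ at the maximum once $\abs\lambda$ is large, for either sign of $\lambda$.

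Within your one-weight scheme you could instead use directly that $\nabla u$ is an eigenvector of the shape operator with eigenvalue $(1-K)we^{-u}$. Then $\abs\second^2\ge(1-K)^2e^{-2u}w^2$, and the net coefficient of $e^{-2u}w^3$ becomes $-2K+O(1)$. This closes for $K\le-1$, say, but fails for large positive $K$. Either way, this is exactly the bookkeeping your proposal leaves open. Your heuristic also points the wrong way: for the sign that works, the $\second(\nabla u,\nabla u)$ term is harmful.

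\textbf{Your boundary step is a genuinely different route from the paper's.} The paper uses no barriers. Since $u$ is constant on $\partial B_r$, the functions $we^{\lambda u}$ and $we^{-\lambda u}$ are proportional there. So if both attain their maximum on $\partial B_r$, they do so at a common point. There the two normal-derivative conditions give $\abs\lambda w\abs{\nabla u}^2\le\abs{\langle\nabla u,\nabla w\rangle}$. The right-hand side is then bounded by $\mu_2 w\abs{\nabla u}^2$, using the equation and the bound $\abs{H_r}\le C_R$ on the mean curvature vector of the codimension-two sphere $F_u(\partial B_r)$. This is where $r>R$ enters, and it is why the paper needs the interior estimate for both signs of $\lambda$.

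A barrier argument for $\div_h(wDu)+mw=me^uH$, with $\abs{me^uH}\le mL\Lambda$, can replace this. It would also let you get away with one sign of $K$. But it must actually be built: the barriers have to stay spacelike with slope bounded away from $1$ while gaining height $\ln(L/l)$, uniformly in $r$.
\begin{itemize}
\item Hyperboloids do not do this naively. For example, a past hyperboloid of radius $<1/\Lambda$ containing the whole sphere $F_u(\partial B_r)$ has tilt of order $e^{2r}$ along it.
\item Profiles $c\pm\psi(r-d)$ in the hyperbolic distance $d$ to the centre, with $\psi$ concave and $\psi'<1$, can be made to work.
\end{itemize}
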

\begin{proof}
    In what follows, $C$ denotes a positive constant that depends on $l$, $L$, $\Lambda$, $R$ and $m$; its value may change from line to line. Additional dependencies will be indicated using subscripts.
    
    Let $\lambda\in \R$, and consider the function 
    \[
    \psi_\lambda = we^{\lambda u}\,.
    \]We denote by $x_\lambda\in \overline{B_r}$ a point where $\psi_\lambda$ attains its maximum.

\medskip
    
   \noindent \textit{Step 1: there exist constants $C >0$ and $\mu_1= \mu_1(l,L ,\Lambda, m) \geq 1$, independent of $r>0$, such that, if $|\lambda| > \mu_1$ and $x_\lambda\in B_r$, then $w\leq C$.}

    \smallskip
    
    Assume that $|\lambda| \geq \mu_1 \geq 1$, and $x_\lambda \in B_r$. Since $x_\lambda$ is an internal maximum point of $\psi_\lambda$, we have that at $x_\lambda$, it holds
    \begin{gather}
        \nabla w = -\lambda w\nabla u, \label{eq:max_pt_1} \\ \label{eq:max_pt_2}
        \Delta_g w - \lambda^2 w\abs{\nabla u}^2 + \lambda w\Delta_g u \leq 0\,.
    \end{gather}

    Now we show that, for every $0<\theta<1$, there exists a constant $C_\theta$ such that
    \begin{align}\label{eq:starting_point}
        (1-\theta)\abs\second^2w - \lambda^2w\abs{\nabla u}^2 - \abs\lambda C_\theta w^3 \leq 0 \qquad \text{at $x_\lambda$.}
    \end{align}
    To see this, first observe that, by \eqref{eq:normal_radial} together with the decomposition $T = F_*T^\top + w N$, we have 
    \[
        T^\top = -e^u\nabla u\,.
    \]
    Moreover, for every $\varphi \in C^1(\h^m)$, the following inequality, which is equivalent to the Cauchy-Schwarz inequality by \eqref{eq:metric_components}, holds
     \[
    \abs{\nabla \varphi} \leq we^{-u}\abs{D\varphi}\,.
    \]
    Thus, recalling that $|Du|<1$ and using also \eqref{hp_2}, we obtain 
    \begin{gather*}
        g( T^\top, \nabla H) \leq e^u|\nabla u||\nabla H|\leq \Lambda w^2 \abs{Du} < \Lambda w^2\,, \\
        \abs{\second(\nabla u,\nabla u)} \leq \abs\second \abs{\nabla u}^2 < \abs\second w^2e^{-2u}\,.
    \end{gather*}
     Substituting these estimates in \eqref{eq:laplacian_w}, applying assumption \eqref{hp_1}, and recalling that $w \geq 1$, $|\lambda| \geq 1$, and $l \leq 1$, we get
    \begin{align*}
        \Delta_g w &\geq w\abs\second^2 - \frac{m(2 + 3\Lambda)} {l^2}w^3 - \frac{2}{l}w^2\abs\second \\
        &\geq (1-\theta) w\abs\second^2 - |\lambda|C_\theta w^3\,,
    \end{align*}
    where the last inequality is a consequence of Young's inequality.
    On the other hand, by \eqref{eq:intrinsic_mean_curvature} and performing                     similar estimates we obtain $\abs{\Delta_g u} \leq Cw^2$. Substituting everything in \eqref{eq:max_pt_2} we obtain \eqref{eq:starting_point}.

    Let now $\kappa_i$ be the eigenvalues of $g^{ik}\second_{kj}$ and call $\kappa$ the one with largest absolute value. Observe that by Young's inequality with weight $\frac{1}{m}$ we have
    \begin{equation}
    \begin{aligned}\label{eq:second_fund_form_decomposition}
        |\second|^2 = \kappa^2 + \sum_{j=2}^m \kappa_j^2 &\geq \kappa^2 + \frac{1}{m-1}\Big(\sum_{j=2}^{m}\kappa_j\Big)^2 \\
        &= \kappa^2 + \frac{(mH-\kappa)^2}{m-1} \geq \left(1+ \frac{1}{m}\right)\kappa^2 - m^2H^2.
    \end{aligned}
    \end{equation}
    Our next goal is to show that for every $\epsilon>0$ there exists a constant $C_\epsilon$ such that
    \begin{align}\label{eq:second_point}
    \left(1+\epsilon\right)\kappa^2w \geq \lambda^2w\abs{\nabla u}^2 - Cw^3 \qquad \text{at $x_\lambda$.}
    \end{align}
    Recalling \eqref{eq:nabla_T}, \eqref{eq:max_pt_1} and \eqref{eq:metric_eta} we compute
    \begin{align*}
        |\nabla w|^2 &= -(F_*\nabla w)\langle T,N\rangle \\
        &= - \langle \bnabla_{F_*\nabla w}T,N\rangle - \langle T,\bnabla_{F_*\nabla w}N \rangle \\
        &= -\bnabla T^\flat(F_*\nabla w, N) - \langle F_*T^{\top},\bnabla_{F_*\nabla w}N \rangle \\
        &= -e^u\bar h(F_*\nabla w, N) + e^u\second(\nabla u,\nabla w) \\
        &= e^{-u}\langle F_*\nabla w,N \rangle + e^ud\tau(F_*\nabla w)d\tau(N) + e^u\second(\nabla u,\nabla w) \\
        &= w\langle \nabla w,\nabla 
         u\rangle - \lambda  we^u\second(\nabla u,\nabla u) \\
        &\leq w\abs{\nabla w}\abs{\nabla u} + \abs\lambda w e^u \abs\kappa \abs{\nabla u}^2 \\
        &= (w + e^u |\kappa|)\abs{\nabla w}\abs{\nabla u}\,,
    \end{align*}
    where we used that $\langle F_*\nabla w,N \rangle = 0$ and $\second(v,v)\leq \abs\kappa\abs{v}^2$. Next, using $\abs{\nabla u} = we^{-u} \abs{Du} < w l^{-1}$ and Young's inequality, we obtain, for any $0<\delta<1$,
    \begin{align*}
        \frac{\abs{\nabla w}^2}{w} &< \frac{w\abs{\nabla w}}{l} + \abs\kappa \abs{\nabla w} \\
        &\leq \frac{\delta}{2} \frac{\abs{\nabla w}^2}{w} + \frac{C_{\delta}}{2} w^3 + \frac{1}{2}\frac{\abs{\nabla w}^2}{w} +\frac{\kappa^2}{2} w\,.
    \end{align*}
    Rearranging the terms and setting $\epsilon = \frac{\delta}{1-\delta}$, we obtain \eqref{eq:second_point}.

    Now, fix $\epsilon = \frac{1}{1+4m}$ and choose $\theta = \theta(m)$ such that $(1-\theta)\left(1+\frac{1}{m} \right) = 1 + \frac{1}{2m}$. By \eqref{eq:second_fund_form_decomposition} and \eqref{eq:second_point}, and \eqref{hp_1} we have
    \begin{align*}
        (1-\theta)\abs\second^2 w \geq \Big(1 + \frac{1}{4m} \Big)\lambda^2 w\abs{\nabla u}^2 - C w^3\,,
    \end{align*}
    which substituted in \eqref{eq:starting_point} gives
    \begin{equation}\label{eq:final_point}
        0\geq \frac{|\lambda|}{4m}\abs{\nabla u}^2 - C w^2\,.
    \end{equation}

    Set $\mu_1 \geq 1$ such that
    \[
    \begin{aligned}
      \mu_1 > \max\{1, 4mL^2C\}\,, \quad \text{ and } \quad 
        \frac{\frac{\mu_1}{4mL^2}}{\frac{\mu_1}{4mL^2} - C} \leq 4\,.
    \end{aligned}
    \]
By \eqref{hp_1} we have $\abs{\nabla u}^2 = e^{-2u}(w^2 -1) \geq L^{-2}(w^2-1)$. Thus, from \eqref{eq:final_point} we obtain that for every $|\lambda | \geq \mu_1$ it holds
    \begin{align*}
        w(x_\lambda)^2 \leq 4  \,.
    \end{align*}
    Since $x_\lambda$ is the maximum point of $\psi_\lambda$, we have $we^{\lambda u} \leq w(x_\lambda)e^{\lambda u(x_\lambda)}$ on $\overline{B_r}$, thus
    \begin{align*}
        w \leq 2e^{\lambda(u(x_\lambda) - u)} \leq 2 \max\{L, l^{-1}\}^{2\abs\lambda}\,,
    \end{align*}
  and the proof of this Step is complete. 

\medskip

\noindent    \textit{Step 2:  there exist constants $C >0$ and $\mu_2 = \mu_2(R,\Lambda,L,l) > 0$, independent of $r>0$, such that, if $\abs\lambda \geq \mu_2$ and both $\psi_\lambda$ and $\psi_{-\lambda}$ attains their maxima on $\partial B_r$, then $w \leq C$.}

\smallskip
    
Assume that both $\psi_{\lambda}$ and $\psi_{-\lambda}$ attain their maximum on $\partial B_r$.
    By \eqref{hp_3}, on $\partial B_r$ we have $\psi_\lambda = c\psi_{-\lambda}$ for some constant $c>0$. Thus the maxima of $\psi_\lambda$ and $\psi_{-\lambda}$ on $\partial B_r$ coincide. Let $x_\lambda$ denote such a common maximum point. Suppose that $\nabla u (x_\lambda) \neq 0$. We will show that this leads to a contradiction provided that $\abs\lambda$ is sufficiently large. 
    
    Since, by assumption, $\partial B_r$ is a level set for $u$, the interior $g$-normal to $\partial B_r$ at $x_\lambda$ is given by either
    \begin{align*}
        n = \frac{\nabla u(x_\lambda)}{\abs{\nabla u(x_\lambda)}} \qquad \text{or} \qquad n= -\frac{\nabla u (x_\lambda)}{\abs{\nabla u(x_\lambda)}}.
    \end{align*}
    In both cases, since $x_\lambda$ is a maximum point of $\psi_{\pm\lambda}$ we have $n(\psi_{\pm\lambda})\leq 0$, which implies
    \begin{align}\label{eq:both_cases}
        |\lambda| w \abs{\nabla u}^2 \leq \abs{\langle \nabla u,\nabla w \rangle}.
                                       \end{align}
    
    We now show that there exists a constant $\mu_2>0$, independent of $\lambda$, such that 
    \begin{align}\label{eq:contro_both_cases}
        \abs{\langle \nabla u,\nabla w \rangle} < \mu_2 w\abs{\nabla u}^2\,,
    \end{align}
    thus contradicting \eqref{eq:both_cases}. Starting from $w^2 = e^{2u}|\nabla u|^2 + 1$, we compute 
    \begin{align*}
        \nabla w = \frac{e^{2u}}{w}\left(\nabla^2u(\nabla u) + |\nabla u|^2\nabla u\right)\,,
    \end{align*}
    and thus
    \[
        \langle \nabla u,\nabla w\rangle = \frac{e^{2u}}{w}\left(\nabla^2u(\nabla u,\nabla u) + |\nabla u|^4\right) = \frac{w^2 -1}{w} \left(\langle n, \nabla_n\nabla u\rangle + |\nabla u|^2\right),
    \]
    regardless of the sign in $n=\pm\frac{\nabla u}{|\nabla u|}$. Choose now a local frame $\{e_i\}$ such that $e_1 = n$, $e_a$, $a = 2, \ldots, m$, are tangent to $\partial B_r$, and the frame is $g$-orthonormal at $x_\lambda$. Then
    \begin{align*}
        \langle n, \nabla_n\nabla u \rangle = \Delta_g u - \sum_{a=2}^m \langle e_a,\nabla_{e_a}\nabla u\rangle = \Delta_g u \mp |\nabla u|\sum_{a=2}^m \langle e_a,\nabla_{e_a}n\rangle.
    \end{align*}
   We recognize the last term as a component of the mean curvature vector of $\partial B$ in $I^+(0)$. More precisely, let $f = F_u|_{\partial B_r}$ (see \eqref{eq:Fradial}) and let $A$ denote its second fundamental form, namely
    \begin{align*}
        A(e_a,e_b) &= \langle f_*e_a,\bnabla_{f_*e_b}n\rangle n - \langle f_*e_a,\bnabla_{f_*e_b}N\rangle N \\
        &= \langle e_a,\nabla_{e_b}n\rangle n - \langle f_*e_a,\bnabla_{f_*e_b}N\rangle N\,, \qquad a,b=2,\dots,m.
    \end{align*}
    The mean curvature vector $H_r$ of $f$ is then
    \begin{align*}
        (m-1)H_r &= \sum_{a=2}^m \langle e_a,\nabla_{e_a} n\rangle n - \sum_{a=2}^m \langle f_*e_a,\bnabla_{f_*e_a}N\rangle N \\
        &= (m-1)\langle H_r,n\rangle n - (m-1)\langle H_r,N\rangle N,
    \end{align*}
    and, in particular, 
    \begin{align*}
        \sum_{a=2}^m \langle e_a,\nabla_{e_a} n\rangle =    (m-1)\langle H_r,n\rangle.
    \end{align*}
    Thus, using \eqref{eq:intrinsic_mean_curvature} we obtain
    \begin{align}\label{eq:scalar_prod}
        \langle\nabla u,\nabla w\rangle = \frac{w^2 - 1}{w}\left(mwe^{-u}H - |\nabla u|^2 - me^{-2u} \mp(m-1) |\nabla u|\langle H_r, n \rangle \right).
    \end{align}
    Since $u$ is constant on $\partial B_r$, $f(\partial B_r)$ is a spacelike round sphere of codimension $2$ in Minkowski spacetime. Hence $H_r$ is a spacelike vector and satisfies $\abs{H_r}\leq \frac{c}{r}$. In particular, if $r\geq R$ then $\abs{H_r}\leq C_R$. 
    Also, by \eqref{eq:normal_radial2} we have
    \begin{align*}
        \langle H_r,n\rangle ^2 = \abs{H_r}^2 + \langle H_r,N\rangle^2 \leq C_R^2(1+e^{2u}w^{-2}\abs{\nabla u}^2).
    \end{align*}
    Substituting this in \eqref{eq:scalar_prod} and using \eqref{hp_1} and \eqref{hp_2} we obtain \eqref{eq:contro_both_cases}.

By \eqref{eq:contro_both_cases} and \eqref{eq:both_cases}, we conclude that $\nabla u(x_\lambda) = 0$ if $|\lambda| \geq \mu_2$, that is, $w(x_\lambda) = 1$. Arguing as in the final part of the proof of Step 1, we obtain that 
    \begin{align*}
        w \leq e^{\lambda(u(x_\lambda) - u)} \leq  \max\{L, l^{-1}\}^{2\abs\lambda}\,,
    \end{align*}
which completes the proof of this Step. 

\medskip

 \noindent   \textit{Step 3: conclusion.}

\smallskip    
    
    Let $\lambda \in \R$ be such that $|\lambda| \geq \max\{\mu_1, \mu_2\}$. If a maximum point of either $\psi_\lambda$ or $\psi_{-\lambda}$ lies in the interior of $B_r$, then $w \leq C$ by Step 1. If, on the other hand, both $\psi_\lambda$ and $\psi_{-\lambda}$ attain their maxima on $\partial B_r$, then $w \leq C$ by Step 2. This completes the proof.
\end{proof}

\section{Proofs of Theorem \ref{thm:main} and Theorem \ref{thm:uniqueness}}\label{sec:proofs}
In this section, we prove the existence result of Theorem \ref{thm:main} and the uniqueness result stated in Theorem \ref{thm:uniqueness}. As a preliminary step, we establish a priori $C^{k,\alpha}$ local regularity estimates for solutions to
\begin{align}\label{eq:hyperbolic_mean_curvature_barH}
    \div_h(wDu) = m(e^u F^*_u\bar H - w)\,,
\end{align}
and, more generally, for spacelike radial graphs with bounded mean curvature. To derive these estimates, it is convenient to rewrite \eqref{eq:hyperbolic_mean_curvature_barH} as a PDE on the unit Euclidean ball $\B^m\subset \R^m$ via the Poincar\'e model of hyperbolic space. More precisely, we identify the hyperboloid $\h^m\cong\h^m(1)$ with $\B^m$ using the hyperbolic stereographic projection $\Phi: \h^m\to \B^m$, which maps each point $q\in\h^m(1)$ to the unique intersection of the line $\{sq \;;\; s\in\R\}$ with the plane $\{q^0 = 1\}$. Explicitly,
\begin{align}\label{eq:stereographic_projection}
    \Phi(q) =  \frac{1}{1+q^0}(q^1,\dots, q^m)\,, \qquad \Phi^{-1}(x) = (\lambda-1, \lambda x)\,,
\end{align}
where
\begin{align*}
    \lambda = \frac{2}{1 - |x|_\delta^2}, \qquad \qquad |x|_\delta^2 = \delta (x,x), \quad x\in \B^m\,,
\end{align*}
and $\delta$ denotes the standard Euclidean metric on $\B^m$. 

The pullback of the hyperbolic metric under $\Phi^{-1}$ is then 
\[
(\Phi^{-1})^*h = \lambda^2\delta\,.
\]
Notice that, by standard invariance of H\"older spaces under smooth diffeomorphisms, for every integer $k \geq 0$, $\alpha \in (0,1)$, and $r <1$, the norms $\|u\|_{C^{k,\alpha}(\Phi^{-1}(B_r^\delta))}$ and $\|u \circ \Phi^{-1}\|_{C^{k,\alpha}(B_r^\delta)}$ are equivalent, with constants depending only on $m$, $k$, $\alpha$, and $r$.

In what follows we will denote the Euclidean metric $\delta$ by a dot $\cdot$ and, with a slight abuse of notation, we will identify $h=(\Phi^{-1})^*h$ and, for any $f\in C(\h^m)$, we will write $f=(\Phi^{-1})^*f$ when considering $f$ as a function on $\B^m$.
\begin{Lemma}
Let $u$ be a spacelike radial graph on $\h^m$ with mean curvature $H$. Then the pullback of $u$ via $\Phi^{-1}$ satisfies
      \begin{equation}\label{eq:mean_conf}
        \div_\delta(w \, \grad u) = m\lambda^2(e^u H - w) - (m - 2) \lambda w \, \grad u\cdot x\, \qquad \text{ in }\B^m.
    \end{equation}

Moreover, \eqref{eq:mean_conf} can be written in non-divergence form as
    \begin{equation}\label{eq:nondiv}
         Pu = f_H(x,u, \grad u) \qquad \text{ in } \B^m\,,
    \end{equation}
    where 
    \begin{equation}\label{eq:Pop}
    Pu = a_{kl}(x,\grad u)u_{kl} + b_k(x,\grad u) u_k
    \end{equation}
with coefficients given, for all $x\in\B^m$, $y \in \R$, and $z\in\mathbb{R}^m$, by
    \begin{align*}
        &a_{kl}(x,z) = (1 - \lambda^{-2}|z|^2) \delta_{kl} + \lambda^{-2} z_k z_l, \\ 
        &b_k(x,z) = \lambda \big[(m-2) - (m-1)\lambda^{-2}|z|^2\big] x_k, \\ 
        &f_H(x,y, z) = m\lambda^2 \big[(1 - \lambda^{-2}|z|^2)^{3/2}e^y H - (1 - \lambda^{-2}|z|^2)\big]\,.
    \end{align*}
 
\end{Lemma}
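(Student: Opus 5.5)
Starting from \eqref{eq:hyperbolic_mean_curvature}, $\div_h(wDu)=m(e^uH-w)$, I will compute the divergence in the conformally flat metric $h=\lambda^2\delta$ on $\B^m$. For a conformal metric $h=\lambda^2\delta$ in dimension $m$, the volume density is $\lambda^m$, and the $h$-gradient is $Du=\lambda^{-2}\grad u$. For any vector field $X$ this gives $\div_h X=\lambda^{-m}\div_\delta(\lambda^m X)$. Taking $X=wDu$,
\[
\div_h(wDu)=\lambda^{-m}\div_\delta\big(\lambda^{m-2}w\grad u\big)=\lambda^{-2}\div_\delta(w\grad u)+(m-2)\lambda^{-3}w\,\grad\lambda\cdot\grad u .
\]
Since $\lambda=2/(1-|x|^2)$, we have $\grad\lambda=\lambda^2x$. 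Hence the last term is $(m-2)\lambda^{-1}w\,\grad u\cdot x$. Multiplying by $\lambda^2$ yields \eqref{eq:mean_conf}.

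For the non-divergence form, I note that $|Du|_h^2=\lambda^{-2}|\grad u|^2$, so $w=(1-\lambda^{-2}|\grad u|^2)^{-1/2}$. I then expand $\div_\delta(w\grad u)=w\Delta_\delta u+\grad w\cdot\grad u$. Differentiating $w$ involves both the $u$-derivatives and the explicit $x$-dependence through $\lambda$:
\[
\partial_k w=w^3\lambda^{-2}\big(u_lu_{lk}-\lambda|\grad u|^2\lambda_k\lambda^{-1}\cdot\lambda^{-1}\lambda\big)=w^3\big(\lambda^{-2}u_lu_{lk}-\lambda^{-1}|\grad u|^2x_k\big).
\]
Here I use $\partial_k(\lambda^{-2})=-2\lambda^{-3}\lambda_k=-2\lambda^{-1}x_k$. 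Therefore
\[
\div_\delta(w\grad u)=w\Delta_\delta u+w^3\lambda^{-2}u_ku_lu_{kl}-w^3\lambda^{-1}|\grad u|^2\,\grad u\cdot x .
\]

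Next I multiply \eqref{eq:mean_conf} by $w^{-3}=(1-\lambda^{-2}|z|^2)^{3/2}$ with $z=\grad u$. The second-order part becomes $(1-\lambda^{-2}|z|^2)\Delta u+\lambda^{-2}z_kz_lu_{kl}$, which is $a_{kl}u_{kl}$.

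The first-order terms come from two sources. One is $-\lambda^{-1}|z|^2\,z\cdot x$ from the expansion above. The other is $(m-2)\lambda w^{-2}\,z\cdot x=(m-2)\lambda(1-\lambda^{-2}|z|^2)\,z\cdot x$, moved to the left-hand side. Adding them gives $\lambda\big[(m-2)-(m-1)\lambda^{-2}|z|^2\big]z\cdot x$, which is $b_ku_k$.

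The right-hand side becomes $m\lambda^2\big(w^{-3}e^uH-w^{-2}\big)$, which is exactly $f_H$.

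The only delicate point is bookkeeping the explicit $x$-dependence of $w$ through $\lambda$; the rest is a routine calculation.
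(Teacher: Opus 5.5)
Your proof is correct and follows the paper's argument essentially step by step: you use the conformal divergence identity $\div_h X=\lambda^{-m}\div_\delta(\lambda^m X)$ with $Du=\lambda^{-2}\grad u$ and $\grad\lambda=\lambda^2x$, then expand $\grad w$ and divide by $w^3$; your term $w^3\lambda^{-1}|\grad u|^2$ equals the paper's $(w^3-w)\lambda$ via $w^2\lambda^{-2}|\grad u|^2=w^2-1$. The only blemish is the garbled middle expression in your formula for $\partial_k w$, which read literally gives $|\grad u|^2x_k$ instead of $\lambda^{-1}|\grad u|^2x_k$, but your final expression and its justification via $\partial_k(\lambda^{-2})=-2\lambda^{-1}x_k$ are correct.
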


\begin{proof}
Recall the conformal transformation identities
    \begin{align*}
        \div_h X = \lambda^{-m} \div_\delta (\lambda^mX)\,, \qquad Du = \lambda^{-2}\grad u\,.
    \end{align*}
  Since $\grad \lambda = \lambda^2 x$, we obtain
\[
\lambda^2 \div_h(wDu) = (m-2)  \lambda w\, \grad u \cdot x + \div_\delta(w\grad u)\,,
\]
and \eqref{eq:mean_conf} follows from \eqref{eq:hyperbolic_mean_curvature}.

Next, observing that $|Du|^2 = \lambda^{-2}|\grad u|^2$, we compute
\begin{align*}
        \grad w = w^3 \lambda^{-2} (\grad^2u(\grad u,\cdot) - \lambda|\grad u|^2 x )\,.
\end{align*}
Thus, also using the identity $w^2\lambda^{-2}|\grad u|^2 = w^2-1$, we find
    \begin{align*}
 \div_\delta(w \grad u) = w \Delta_\delta u + w^3 \lambda^{-2}\grad^2u(\grad u, \grad u) - (w^3-w)\lambda \grad u \cdot x\,.
    \end{align*}
The non-divergence form \eqref{eq:nondiv} follows by substituting into \eqref{eq:mean_conf} and dividing by $w^3$.
\end{proof}

We now prove the main regularity lemma. The proof relies on \cite[Theorem 1.4]{mingione}, and adapts the strategy of \cite[Theorem 1.6]{BIA} 

\begin{Lemma}\label{lemma:mingione}

Let $r > 0$, and let $u \in C^{2}(\Phi^{-1}(B^\delta_r))$ be a spacelike radial graph over $\h^m$ with mean curvature $H$. Assume that in $\Phi^{-1}(B^\delta_r)$
\[
\ln l \leq u \leq \ln L, \qquad |Du| \leq 1 - \theta\,,
\]
for some constants $l \leq 1 \leq L$ and $\theta \in (0,1)$. Then the following hold:
\begin{enumerate}[label=$(\roman*)$]
\item If $\|H\|_{L^\infty(\Phi^{-1}(B^\delta_r))} \leq \Lambda$, then for every $\alpha \in (0,1)$ there exists a constant $C_r= C_r(m,r,\alpha,\Lambda,l, L, \theta)$ such that
\[
\|u\|_{C^{1,\alpha}(\Phi^{-1}(B^\delta_{r/2}))} \leq C_r.
\]
\item If $\|H\|_{C^{k,\alpha}(\Phi^{-1}(B^\delta_r))} \leq \Lambda$ for some integer $k \geq 0$ and $\alpha \in (0,1)$, then 
\[
u \in C^{k+2,\alpha}(\Phi^{-1}(B^\delta_{r/2}))\,,
\]
and there exists a constant $C'_r = C'_r(m,r,k, \alpha,\Lambda,l, L, \theta)$ such that 
\[
\|u\|_{C^{k+2,\alpha}(\Phi^{-1}(B^\delta_{r/2}))} \leq C'_r.
\]
\item If the assumptions in $(i)$ or $(ii)$ hold for every $r > 0$, with $\Lambda$, $l$, $L$, and $\theta$ independent of $r$, then the constants $C_r$ and $C'_r$ can be chosen independently of $r$.
\end{enumerate}
\end{Lemma}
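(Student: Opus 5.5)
The plan is to work throughout in the Poincaré ball picture given by the previous Lemma. In the Euclidean ball $B^\delta_r$, $u$ solves the divergence-form equation \eqref{eq:mean_conf} and the non-divergence form \eqref{eq:nondiv}. On $B^\delta_r$ with $r<1$ the conformal factor $\lambda$ satisfies $2\le\lambda\le 2/(1-r^2)$, and $|\grad u| = \lambda|Du| \le \lambda(1-\theta)$. Hence $\lambda^{-2}|\grad u|^2\le (1-\theta)^2$, and the matrix $a_{kl}(x,\grad u)$ has eigenvalues in $[1-(1-\theta)^2,\,1]$. So $P$ is uniformly elliptic with constants depending only on $\theta$. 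The coefficients $b_k$ and the right-hand side $f_H$ are bounded in terms of $m,r,\Lambda,L,\theta$, using $\ln l\le u\le \ln L$ and $|H|\le\Lambda$.

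For $(i)$, I will view \eqref{eq:mean_conf} as $\div_\delta(A(x,\grad u)) = \mu$ with $A(x,z)=w(x,z)z$, where $w=(1-\lambda^{-2}|z|^2)^{-1/2}$. The measure $\mu$ equals the right-hand side of \eqref{eq:mean_conf}, which is in $L^\infty$ with a bound depending on $m,r,\Lambda,L,\theta$. Since $|\grad u|$ is already bounded, I may modify $A$ outside the range $|z|\le\lambda(1-\theta)$ so that it becomes a smooth vector field satisfying standard $p=2$ growth and ellipticity. Its $x$-dependence through $\lambda$ is smooth on $\overline{B^\delta_r}$, hence Hölder/Dini. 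The Kuusi–Mingione universal potential estimates \cite[Theorem 1.4]{mingione} then give continuity of $\grad u$, with modulus controlled by the Riesz potential of $|\mu|$. For an $L^\infty$ datum $\mu$ this yields $\grad u\in C^{0,\alpha}$ for every $\alpha<1$, with the bound depending on $\|\mu\|_{L^\infty}$ and $\|\grad u\|_{L^\infty}$, as in \cite[Theorem 1.6]{BIA}. Equivalence of Hölder norms under $\Phi$ turns this into the stated $C^{1,\alpha}$ bound on $\Phi^{-1}(B^\delta_{r/2})$.

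For $(ii)$ I will bootstrap with the non-divergence form. By $(i)$, $\grad u\in C^{0,\alpha}$, so $a_{kl}(x,\grad u)$, $b_k(x,\grad u)$ and $f_H(x,u,\grad u)$ are $C^{0,\alpha}$; the last one also uses that $H\in C^{0,\alpha}$. I then regard \eqref{eq:nondiv} as a linear equation with frozen Hölder coefficients. Interior Schauder estimates, applied on nested balls between $r/2$ and $r$, give $u\in C^{2,\alpha}$ with a quantitative bound. Differentiating the equation and repeating the argument raises regularity one step at a time, up to $C^{k+2,\alpha}$ when $H\in C^{k,\alpha}$.

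For $(iii)$ the point is that all the constants above depend only on the data listed in the statement. These are the ellipticity and the bounds on the coefficients, and the bounds on $\lambda$ come from the fixed radius $r$, not from how large a domain $u$ is defined on. The way I would organize it: to obtain a bound near an arbitrary point $q\in\h^m$, I would apply a hyperbolic isometry moving $q$ to the centre. Such an isometry is a Lorentz transformation fixing $o$; it preserves radial graphs, $u$, $|Du|$, $H$ and the bounds on them. The hypotheses therefore transfer with identical constants, and the resulting estimate is uniform.

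The main obstacle is step $(i)$. One has to check that the truncated vector field $A$ meets the structural hypotheses of \cite{mingione}, including the Dini/Hölder dependence on $x$ through $\lambda$. One must also track that the constants depend only on $\theta$, $r$ and the bounds on $u$ and $H$. Once the Hölder continuity of $\grad u$ is in hand, the remaining steps are routine Schauder theory.
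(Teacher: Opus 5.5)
Your proposal is correct and follows essentially the same route as the paper. For $(i)$, truncate $A(x,z)=w(x,z)\,z$ outside the admissible gradient range so that it has $2$-growth and Lipschitz (hence Dini--H\"older) dependence on $x$ through $\lambda$, then apply Kuusi--Mingione \cite[Theorem 1.4]{mingione} with the bounded datum $\mu$. For $(ii)$, bootstrap via Schauder estimates for the uniformly elliptic non-divergence form \eqref{eq:nondiv}. For $(iii)$, use Lorentz boosts fixing $o$, which induce isometries of the Poincar\'e ball sending any point to the origin, so that a fixed-radius estimate (the paper uses $B^\delta_{1/4}$) gives constants independent of $r$.
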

\begin{proof}
Since $\|u\|_{L^\infty(B^\delta_r)} \leq \max\{|\ln l|, \ln L\}$ by assumption, to prove $(i)$  it suffices to show that for all $\alpha \in (0,1)$, there exists a constant $C = C(m,r,\alpha,\Lambda, L, \theta)$ such that 
    \begin{equation}\label{eq:c1alpha}
    \sup_{\substack{x,y \in B_{r/2}^\delta \\x \neq y}} \frac{|\grad u(x) - \grad u(y)|}{|x-y|^\alpha} \leq C.
    \end{equation}

Define 
\[
a(x, z)  = \frac{1}{\sqrt{1-\varphi(x, |z|)^2}}\,,
\]
where $\varphi: B_r^\delta \times [0, \infty)\to \R$ is a smooth function such that 
\[
\varphi(x, s) = 
\begin{cases}
\lambda^{-1}(x)s & \qquad \text{ if } \lambda^{-1}(x)s < 1 - \frac{\theta}{2}\\
1 - \frac{\theta}{4} & \qquad \text{ if } \lambda^{-1}(x)s > 1 - \frac{\theta}{4}
\end{cases}
\]
and for each $x \in B_r^\delta$ the map $s \mapsto \varphi(x, s)$ is non-decreasing. Since $u$ solves \eqref{eq:mean_conf} and satisfies $|Du| = \lambda^{-1}|\grad u| \leq 1 - \theta$, it follows that 
\[
-\div_\delta A(x, \grad u) = \mu \quad \text{in } B_r\,,
\]
where 
\[
\begin{aligned}
&A(x, z) = a(x, z)z\,,\\
&\mu = -m\lambda^2 (e^u H - w) + (m - 2)\lambda w \, \grad u \cdot x\,.
\end{aligned}
\]
Arguing exactly as in \cite[Proof of Theorem 1.6]{BIA}, we check that $A$ satisfies the growth condition \cite[(1.2)]{mingione}. Next, observe that if $|z| > \tilde r :=\frac{2}{1-r^2}(1 - \frac{\theta}{4})$, then $x \mapsto a(x, z)$ is constant in $B_r^\delta$. As a consequence, for every $x, y \in B^\delta_r$ and $z \in \R^m$, we have
\[
|a(x, z) - a(y, z)| \leq C_{r,\theta}|\varphi(x, |z|)- \varphi(y, |z|)| \leq C_{r, \theta}\|\varphi\|_{C^{0,\alpha}(B_r^\delta \times [0, \tilde r])}|x-y|\,.
\]
In particular, $a$ is Lipschitz continuous in $x$, and therefore $A$ satisfies the Dini-H\"older continuity assumption required in \cite[Theorem 1.4]{mingione}, for any $\alpha \in (0,1)$. Finally, since $e^u\leq L$ and $\norm{H}{L^\infty(B^\delta_r)}\leq \Lambda$, it follows that $|\mu| \leq c$ for a constant depending only on $\Lambda, L,\theta,r$. Therefore, there exist positive constants depending only on $\Lambda, L,\theta,r$ and $\alpha$ such that
    \begin{align*}
        \fint_{B^\delta_r}|\grad u| \, dx \leq c, \qquad \text{ and }\qquad \int_{0}^r \frac{1}{\rho^{m + \alpha}}\int_{B^\delta_\rho} |\mu| \, dx \, d\rho \leq c\int_0^r \frac{d\rho}{\rho^{\alpha}} < \infty\,.
    \end{align*}
Then \eqref{eq:c1alpha} follows from \cite[Theorem 1.4]{mingione} and $(i)$ is proved.

We turn to the proof of $(ii)$. Since $\lambda^{-1}|\grad u| \leq 1-\theta$, the operator $P$ in \eqref{eq:Pop} is uniformly elliptic on $B_r^{\delta}$, with ellipticity constant depending only on $\theta$. By classical Schauder estimates, for every $k \ge 0$ and $\alpha \in (0,1)$, it holds 
\[
\|u\|_{C^{k+2,\alpha}(B^\delta_{r/2})}
\le C\Big(
\|f_H(x,u,\grad u)\|_{C^{k,\alpha}(B^\delta_r)} 
+ \|u\|_{L^\infty(B^\delta_r)}
\Big),
\]
where $C>0$ is a constant depending only on $m$, $r$, $\theta$, $\alpha$ and the norms $\|a_{ij}(x,\grad u)\|_{C^{k,\alpha}(B^\delta_r)}$ and $\|b_{j}(x,\grad u)\|_{C^{k,\alpha}(B^\delta_r)}$.
Let $k=0$ and fix $\alpha \in (0,1)$. Thanks to $(i)$, the quantities $\|a_{ij}(x,\grad u)\|_{C^{0,\alpha}(B^\delta_r)}$, $\|b_{ij}(x,\grad u)\|_{C^{0,\alpha}(B^\delta_r)}$ and $\|f_H(x,u,\grad u)\|_{C^{0,\alpha}(B^\delta_r)}$ are uniformly bounded, and hence $u \in C^{2,\alpha}(\Phi^{-1}(B^\delta_{r/2}))$. The conclusion for general $k \geq 1$ follows by a standard bootstrap argument. 

Suppose now that the assumptions on $H, u, Du$ hold on the whole $\h^m$ with uniform constants $\Lambda, L, \theta$. Fix any point $q\in\h^m(1)$, and let $\Psi$ be a Lorentzian boost mapping $q$ to $(1,0,\dots,0)\in\LM^{m+1}$. Let $\psi:\B^m\to\B^m$ be the map making the diagram
    \[
    \begin{tikzcd}
        \h^m \arrow{r}{\Psi} \arrow[swap]{d}{\Phi} & \h^m \arrow{d}{\Phi} \\
        \B^m \arrow{r}{\psi}& \B^m
    \end{tikzcd}
    \]
    commute with the stereographic projection $\Phi$ (see \eqref{eq:stereographic_projection}). By construction, $\psi$ is an isometry of $(\B^m,\lambda^2\delta)$ that maps $\Phi(x)\cong x$ to the origin. The pullback $\psi^*u$ solves $P(\psi^*u) = f_{\psi^*H}$, and the bounds on $u$, $\grad u$, and $H$ are preserved. Under the assumptions of $(i)$, for instance, we obtain
    \begin{align*}
        \norm{u}{C^{1,\alpha}(\psi^{-1}(B^\delta_{1/4}))}&=\norm{\psi^*u}{C^{1,\alpha}(B^\delta_{1/4})} \leq C_{1/4}(m, \alpha, \Lambda, l, L, \theta)\,,
    \end{align*}
    with a constant independent of $r$. Since every point $q \in \h^m$ can be mapped to the origin and the sets $\psi^{-1}(B^\delta_{1/4})$ cover $\h^m$ with uniform control, the global estimate follows. This concludes the proof.
\end{proof}

\begin{proof}[Proof of Theorem \ref{thm:main}]  Let $\bar H \in C^{1,\alpha}(I^+(o))$ satisfy \eqref{hp:1}-\eqref{hp:3}. We first verify that $\bar H$ satisfies the mean curvature structure conditions (MCSC) introduced in \cite{Bartnik88}. Let $\Sigma$ be a spacelike radial graph with future-directed unit normal $N$ and tilt function $w$. By \eqref{hp:2} we have that $ |\bar H| \leq \Lambda$ and $|d\bar H|_{E} \leq \frac{\Lambda}{2}$ on $I^+(o)$, where the norm is computed with respect to the Riemannian metric $\eta_E = \eta + 2(T^\flat)^2$. Observe that 
    \begin{align*}
        \abs{N}_E^2 = \langle N,N\rangle + 2\langle T,N\rangle ^2 = 2w^2 - 1 \leq 2w^2\,.
    \end{align*}
Using also that $|T|_E = 1$, we estimate the gradient of the restriction $H = \bar H_{|\Sigma}$ as
    \begin{align*}
        \abs{\nabla H}^2 &= \langle \bnabla \bar H,\bnabla \bar H \rangle + d\bar H(N)^2 \\
        &= |\bnabla \bar H |_E^2 - 2 d\bar H(T)^2 + d\bar H(N)^2 \leq \Lambda^2 w^2\,.
    \end{align*}
In particular, $|H| \le \Lambda$ and $|\nabla H| \le \Lambda w$, so that $\bar H$ satisfies (MCSC).

For each integer $j \ge 1$, let $B_j$ be the geodesic ball in $\h^m$ of radius $j$ centred at a fixed point $x_0 \in \h^m$, and consider the Dirichlet problem
\begin{align}\label{eq:dirichlet_j}
    \begin{cases}
        \div_h(wDu) = m(e^u F_u^*\bar H -w) &\quad \text{in $B_j$} \\
        u = 0 &\quad \text{on $\partial B_j$}.
    \end{cases}
\end{align}
Since $\bar H$ satisfies (MCSC), by \cite[Theorem 4.1]{Bartnik88}, we get that there exists a spacelike solution $u_j \in C^{3,\alpha}(B_j)$ to \eqref{eq:dirichlet_j}.

Next, using \eqref{hp:1}-\eqref{hp:3} and Proposition \ref{prop:height_estimates}, we infer that $\ln l \leq u_j \leq \ln L$. Let us set $H_j := F^*_{u_j}\bar H$. Then, as seen above, we check that $|H_j| \le \Lambda$ and $|\nabla H_j| \le \Lambda w_j$, where $w_j$ is the tilt function associated with $u_j$. Since $u_j = 0$ on $\partial B_j$, Lemma \ref{prop:gradient_estmates} gives a uniform bound $w_j \le C$ on $\overline{B_j}$, with $C$ independent of $j$. In particular, this implies the uniform estimate $|Du_j| \le 1 - \theta$ for some $\theta \in (0,1)$ independent of $j$. 

Fix now $R > 0$, let $B_R \subset \h^m$ be a geodesic ball, and take $r > 0$ such that $B_R = \Phi^{-1}(B_r^\delta)$. Applying Lemma \ref{lemma:mingione}-$(i)$ to $u_j$ in $B_r^\delta$ (with $H = H_j$), and then Lemma \ref{lemma:mingione}-$(ii)$ twice, we obtain higher-order regularity estimates. Namely, there exists a positive constant $C = C(m, r, \alpha, \Lambda, l, L, \theta)$ such that
\begin{equation}\label{eq:c3est}
\|u_j\|_{C^{3,\alpha}}(B^\delta_r) \leq C\,.
\end{equation}
By standard precompactness results for H\"older spaces (see \cite[Lemma 6.36]{GilTru}), up to a subsequence we have that $u_j \to u$ in $C^{3,\beta}(B_r^\delta)$ for every $\beta < \alpha$. In fact, thanks to \eqref{eq:c3est}, it holds that $u \in C^{3,\alpha}(B_r^\delta)$. Passing to the limit in the equation $Pu_j =  f_{H_j}(x,u_j, \grad u_j)$, we conclude that $u$ solves
$Pu = f_{F^*_u \bar H}(x, u,  \grad u)$ in $B^\delta_{r}$.
Since $R$ and thus $r$ were arbitrary, a standard diagonal argument gives a global solution $u \in C^{3,\alpha}(\h^m)$ to \eqref{eq:problema}.

Finally, by construction $u$ satisfies $\ln l \le u \le \ln L$ and $|Du|\leq 1-\theta$. In particular, $u$ is a spacelike radial graph and lies between two ALC hypersurfaces. Hence it is itself ALC by Lemma \ref{L:ALCf}. This completes the proof.
\end{proof}

\begin{proof}[Proof of Theorem \ref{thm:uniqueness}]
Let $\bar H\in C^1(I^+(o))$ satisfy \eqref{hp:1'}-\eqref{hp:2'}. Notice that \eqref{hp:1'} is equivalent to 
    \begin{align}\label{hp:stronger}
        \partial_t\left(e^t \bar H(xe^t) \right)\geq c >0 \qquad \text{ for all } x\in\h^m\,.
\end{align}
Let $u,v\in C^{2}(\h^m)$ be bounded solutions to \eqref{eq:problema} such that $w_u,w_v\in L^\infty(\h^m)$, where $w_u=(1 - |Du|^2)^{-1/2}$ and $w_v=(1 - |Dv|^2)^{-1/2}$.
Let us immediately point out that, since $w_u, w_v$ are bounded in $\h^m$, then $|Du| < 1 - \theta_u$, $|Dv| < 1-\theta_v$ in $\h^m$, for some constants $\theta_u, \theta_v \in (0,1)$. Therefore, using also \eqref{hp:2'}, by Lemma \ref{lemma:mingione}-$(iii)$ we infer that 
\[
\|u\|_{C^{2,\alpha}(\h^m)} \leq C \quad \text{ and } \quad  \|v\|_{C^{2,\alpha}(\h^m)} \leq C\,.
\]

Assume by contradiction that $u \neq v$. Without loss of generality, suppose that $\{u >v\} \neq \emptyset$, and set $\varphi = u-v$. Then $\varphi$ is bounded and $\sup \varphi >0$. Applying the Omori--Yau maximum principle to $\varphi$, we obtain a sequence $\{x_j\}\subseteq \h^m$ such that 
    \begin{align*}
        \varphi(x_j) \geq \sup \varphi - \frac{1}{j}\,, \qquad |D\varphi(x_j)|\leq \frac{1}{j}\,, \qquad D^2\varphi(x_j) \leq \frac{h_{x_j}}{j}\,.
    \end{align*}
    Set
    \[
    \Theta(t,x) := e^t \bar H(xe^t)\,.
    \]
    Notice that, thanks to \eqref{hp:stronger}, it holds $\partial_t\Theta(t,x) \geq c >0$. Since both $u$ and $v$ solve \eqref{eq:problema} (see also \eqref{eq:hyperbolic_mean_curvature}), we have
    \begin{equation} \label{eq:e^uH}
     \begin{aligned}
        m(\Theta( u(x_j),x_j) - \Theta(v(x_j),x_j)) &= m(w_u(x_j) - w_v(x_j))  \\ 
        &+ w_u(x_j)\Delta_h u(x_j) - w_v(x_j)\Delta_h v(x_j)  \\ 
        &+w_u^3(x_j)D^2u(Du,Du)(x_j) - w^3_v(x_j)D^2v(Dv,Dv)(x_j).
        \end{aligned}
    \end{equation}
  We next prove that
    \begin{align}\label{eq:limit_of_e^uH}
        \limsup_{j\to\infty}(\Theta( u(x_j),x_j) - \Theta(v(x_j),x_j)) \leq  0\,.
    \end{align}
    First, since $|D\varphi(x_j)| = |Du(x_j) - Dv(x_j)|\to 0$, we have 
    \[
    |w_u(x_j) - w_v(x_j)|\to 0 \quad \text{ as } j \to \infty\,,
    \]
    and the first term in the right-hand side of \eqref{eq:e^uH} vanishes as $j\to \infty$. For the second term, omitting the dependence on $x_j$, we write
    \begin{align*}
        w_u\Delta_h u - w_v\Delta_h v &= (w_u - w_v)\Delta_h u + w_v\Delta_h\varphi \leq |w_u - w_v||\Delta_h u| + w_v \frac{m}{j}\,.
    \end{align*}
    Since $|\Delta_h u| \leq \sqrt{m}|D^2u|$ is bounded, this term also vanishes as $j \to \infty$. As for the third term, we compute
    \begin{align*}
        w_u^3D^2u(Du,Du) - w^3_vD^2v(Dv,Dv)& \\
        = (w_u^3 - w_v^3)D^2u(Du,Du) + w_v^3 \big(&D^2\varphi(Du,Du) + D^2v(D\varphi,Du) + D^2v(Dv,D\varphi) \big) \\
        &\leq c|w_u(x_j) - w_v(x_j)||D^2u| + \frac{w_v}{j} \left( m + 2|D^2v| \right),
    \end{align*}
    which vanishes as $j\to \infty$ because $w_v$ is bounded. Thus \eqref{eq:limit_of_e^uH} is proved.

    By Lagrange's theorem, for every $j$ there exists $s(x_j) \in (v(x_j), u(x_j))$ such that
    \begin{align*}
        \Theta( u(x_j),x_j) - \Theta(v(x_j),x_j) = \partial_t\Theta(s(x_j))( u(x_j) - v(x_j))\,.
    \end{align*}
    Noticing that $\varphi(x_j)> 0$ for $j$ sufficiently large, by \eqref{hp:stronger} and \eqref{eq:limit_of_e^uH} we infer
    \begin{align*}
        0 < \varphi(x_j) = u(x_j) - v(x_j) &= \frac{\Theta( u(x_j),x_j) - \Theta(v(x_j),x_j)}{\partial_t\Theta(s(x_j))}\\
        &\leq \frac{\Theta( u(x_j),x_j) - \Theta(v(x_j),x_j)}{c} \to 0\,,
    \end{align*}
which is a contradiction. Therefore $u = v$, and the theorem is proved.
\end{proof}

\section*{Acknowledgment}
\thanks{We would like to thank Prof. D. Bonehure and Prof. A. Seppi for the useful discussions.}

\end{document}